\numberwithin{equation}{section}
\numberwithin{figure}{section}
\theoremstyle{plain}
\newtheorem{thm}{\protect\theoremname}
\theoremstyle{plain}
\theoremstyle{plain}
\newtheorem{lem}[thm]{\protect\lemmaname}
\theoremstyle{remark}
\newtheorem{rem}[thm]{\protect\remarkname}
\theoremstyle{plain}
\newtheorem{prop}[thm]{\protect\Propositionname}
\theoremstyle{remark}
\theoremstyle{definition}
\theoremstyle{plain}
\newtheorem{cor}[thm]{Corollary}
\date{}
\setlist[itemize]{noitemsep,topsep=5pt}
\titleformat{\section}{\large\bfseries\filleft}{\thesection}{1em}{}[{\titlerule[0.8pt]}]
\renewcommand\labelenumi{(\roman{enumi})}
\renewcommand\theenumi\labelenumi
\DeclareMathOperator{\coker}{coker}
\DeclareMathOperator{\codim}{codim}
\DeclareMathOperator{\Aut}{Aut}
\DeclareMathOperator{\Ext}{Ext}
\DeclareMathOperator{\im}{im}
\DeclareMathOperator{\Mor}{Mor}
\DeclareMathOperator{\PGL}{PGL}
\let\oldtheorem\thm
\renewcommand{\thm}{\oldtheorem\normalfont}
\let\oldprop\prop
\renewcommand{\prop}{\oldprop\normalfont}
\let\oldcor\cor
\renewcommand{\cor}{\oldcor\normalfont}
\let\oldlem\lem
\renewcommand{\lem}{\oldlem\normalfont}
\providecommand{\claimname}{Claim}
\providecommand{\definitionname}{Definition}
\providecommand{\lemmaname}{Lemma}
\providecommand{\Propositionname}{Proposition}
\providecommand{\questionname}{Question}
\providecommand{\remarkname}{Remark}
\providecommand{\theoremname}{Theorem}
\begin{document}
\global\long\def\A{\mathbb{A}}%

\global\long\def\C{\mathbb{C}}%

\global\long\def\E{\mathbb{E}}%

\global\long\def\F{\mathbb{F}}%

\global\long\def\G{\mathbb{G}}%

\global\long\def\H{\mathbb{H}}%

\global\long\def\N{\mathbb{N}}%

\global\long\def\P{\mathbb{P}}%

\global\long\def\Q{\mathbb{Q}}%

\global\long\def\R{\mathbb{R}}%

\global\long\def\O{\mathcal{O}}%

\global\long\def\Z{\mathbb{Z}}%

\global\long\def\ep{\varepsilon}%

\global\long\def\laurent#1{(\!(#1)\!)}%

\global\long\def\wangle#1{\left\langle #1\right\rangle }%

\global\long\def\ol#1{\overline{#1}}%

\global\long\def\mf#1{\mathfrak{#1}}%

\global\long\def\mc#1{\mathcal{#1}}%

\global\long\def\norm#1{\left\Vert #1\right\Vert }%

\global\long\def\et{\textup{ét}}%

\global\long\def\Et{\textup{Ét}}%
\title{Non-smoothness of Moduli Spaces of Higher Genus Curves on Low Degree Hypersurfaces}
\author{Matthew Hase-Liu and Amal Mattoo}

\maketitle
\begin{abstract}
We show that the moduli space of degree $e$ maps from smooth genus $g\ge 1$ curves to an arbitrary low degree smooth hypersurface is singular when $e$ is large compared to $g$. We also give a lower bound for the dimension of the singular locus. 
\end{abstract}
\tableofcontents{}

\section{Introduction}
Let $X$ be a smooth complex hypersurface in $\P^n$ of degree $d$. For $d=1$ and $d=2$ (and more generally for a convex variety), Kontsevich famously showed in \cite{kont} that the compactified moduli space $\overline{\mathcal{M}}_{0,k}\left(X, e\right)$ of rational curves on $X$ of degree $e$ and $k$ marked points is smooth. 

A natural question is whether similar behavior is exhibited for moduli spaces of curves on hypersurfaces, where both the genus of the curves and the degrees of the hypersurfaces are at least one. 

We work exclusively over $\mathbb{\C}$. Denote by $\mathcal{M}_g\left(X, e\right)$ the moduli space of degree $e$ maps from smooth projective genus $g$ curves to $X$. Since the $g=0$ case is relatively well-understood, we focus on the $g\ge 1$ case. Note that the compactified Konstevich space $\overline{\mathcal{M}}_g(X,e)$ is almost always singular.

Our main result is as follows:

\begin{thm}\label{mainthm}
    Let $e,g,d,n$ be positive integers and $X$ a smooth hypersurface in $\P^n$ of degree $d$. Then $\mathcal{M}_{g}(X,e)$ is not smooth in the following cases:
    
    \begin{tabular}{l l}
         $d=2$: & If $g=1,$ $e\ge 14$, $n\ge 5$ or $g\ge 2$, $e\ge 35g/2 - 15/2$, $n\ge 5$.\\
         $d\geq 3$: & If $e\ge \max\left(2g-1, g+2\right)$ and $n\ge d-1$.
    \end{tabular}
    
    Moreover, the dimension of the singular locus is at least $2n+2e+2g-7$ for $d=2$ and at least $n+2e+2g-5$ for $d\ge 3$.
\end{thm}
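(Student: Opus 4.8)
The plan is to control the deformation theory of a map $[f\colon C\to X]$, reduce non-smoothness to the failure of a single cohomological surjectivity, and then exhibit an explicit family of maps where this failure occurs. Deformations of the pair $(C,f)$ (with $C$ smooth of genus $g$ and $f$ non-constant) are governed by the two-term complex $[T_C\xrightarrow{df}f^{*}T_X]$: since $df$ is injective the infinitesimal-automorphism space $\mathbb{H}^{0}$ vanishes, and a hypercohomology computation together with Riemann--Roch gives
$$\dim T_{[f]}\mathcal{M}_g(X,e)=\rho+\dim\coker\!\big(H^{1}(C,T_C)\xrightarrow{df_{*}}H^{1}(C,f^{*}T_X)\big),$$
where $\rho=(n+1-d)e+(n-4)(1-g)$ is the expected dimension. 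Thus $\mathcal{M}_g(X,e)$ is smooth of dimension $\rho$ at $[f]$ exactly when $df_{*}$ is surjective, and its Zariski tangent space strictly exceeds $\rho$ otherwise.

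To turn such a tangent-space jump into genuine singularity I would realize $\mathcal{M}_g(X,e)$ as the zero scheme $Z(\sigma)$ of a section $\sigma$ of a vector bundle $\mathcal{E}$ over $M:=\mathcal{M}_g(\mathbb{P}^n,e)$: the fibre of $\mathcal{E}$ at $[f\colon C\to\mathbb{P}^n]$ is $H^{0}(C,f^{*}\mathcal{O}(d))$, of rank $de+1-g$, and $\sigma$ sends $f$ to $f^{*}F$ for $F$ the defining equation of $X$. For $e\ge 2g-1$ one has $H^{1}(C,f^{*}T_{\mathbb{P}^n})=0$, so $M$ is smooth, $\coker(d\sigma_{[f]})$ is identified with the obstruction space $\coker(df_{*})$, and every component of $Z(\sigma)$ has dimension at least $\rho$. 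The decisive consequence is: \emph{if the obstructed locus $\{[f]:df_{*}\text{ not surjective}\}$ has dimension $<\rho$, then $Z(\sigma)$ is pure of dimension $\rho$ and generically smooth, and every point of that locus is then a singular point.}

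I would then construct obstructed maps by composing a degree-$e$ branched cover $\phi\colon C\to\mathbb{P}^1$ with a rational curve $\ell\subset X$ (a line, when $X$ contains one). Because $N_{\ell/X}=\bigoplus_{i}\mathcal{O}(a_i)$ has rank $n-2$ and degree $c_1(T_X)\cdot\ell-2=n-1-d<n-2$, at least one $a_i\le 0$; hence $f^{*}T_X$ acquires a summand $\mathcal{O}_C(a_i e)$ of non-positive degree and $H^{1}(C,f^{*}T_X)\ge g>0$ for $g\ge1$. Moreover $df$ lands in the tangent-to-$\ell$ summand $f^{*}T_\ell\cong\mathcal{O}_C(2e)$, which is $H^{1}$-acyclic, so $df_{*}=0$ and $\coker(df_{*})=H^{1}(C,f^{*}T_X)\neq0$: these maps are genuinely obstructed. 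For $d=2$ the lines on the smooth quadric $X$ form a family of dimension $2n-5$, so such maps sweep out a locus of dimension $(2e+2g-2)+(2n-5)=2n+2e+2g-7$, matching the stated bound; for $d\ge3$ one uses a variant of the construction that persists down to $n=d-1$ (where $X$ is Calabi--Yau and may contain no honest lines), giving $n+2e+2g-5$.

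The hard part will be the dimension estimate underlying the italicized implication: bounding the \emph{entire} obstructed locus, not merely the constructed family, by $\rho-1$. A point with $H^{1}(C,f^{*}T_X)\neq0$ is only obstructed, not automatically singular, since a priori it could lie on an excess component on which the scheme is smooth; ruling this out requires showing that maps with $df_{*}$ non-surjective sweep out a locus of dimension strictly below $\rho$. I expect this to rest on a careful stratification of the cokernel $H^{0}(C,f^{*}\mathcal{O}(d))/\sum_i g_i\,H^{0}(C,f^{*}\mathcal{O}(1))$, with $g_i=(\partial_i F)(f)$, and it is precisely this estimate that should force the numerical hypotheses $e\ge\max(2g-1,\,g+2)$ for $d\ge3$ and the stronger thresholds (e.g. $e\ge 14$ when $g=1$) for $d=2$. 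The remaining steps---the Riemann--Roch bookkeeping and the verification that the constructed family meets the smooth locus, so that its general member lies on a component of dimension exactly $\rho$---are comparatively routine once this upper bound is secured.
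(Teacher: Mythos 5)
Your deformation-theoretic setup, tangent-space formula, and construction of obstructed maps (degree-$e$ covers of a line with a non-positive summand in its normal bundle) are all sound and agree with what the paper does. But the proof has a genuine gap exactly where you flag "the hard part": the implication \emph{obstructed $\Rightarrow$ singular} is never established. Your route requires showing that the \emph{entire} obstructed locus has dimension $<\rho$, and you only say you "expect" this to follow from a stratification. For $d=2$ this estimate is essentially equivalent to irreducibility plus expected dimension of the moduli space, which is not a routine stratification argument --- it is the circle-method Theorem~1 of \cite{haseliu2024higher}, and importing it is precisely how the paper proceeds (it is also the sole source of the hypotheses $e\ge 14$, $e\ge 35g/2-15/2$, $n\ge 5$). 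Without that input, your argument for $d=2$ is incomplete.

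Worse, for $d\ge 3$ the hypothesis of your "decisive consequence" is provably false in the range the theorem covers. The theorem allows $n\ge d-1$, and there $\rho=(n+1-d)e+(n-4)(1-g)$ grows at most like $2e$ (and for $n=d-1$ is bounded independently of $e$), while your own constructed obstructed family already has dimension roughly $2n+2e+2g-7$ (resp.\ $n+2e+2g-5$), which exceeds $\rho$ for large $e$. So the obstructed locus is \emph{not} of dimension $<\rho$, the zero locus $Z(\sigma)$ is not pure of dimension $\rho$, and no dimension count of this kind can close the argument. The paper circumvents this with a different idea that needs no knowledge of the dimension or irreducibility of $\Mor_e(C,X)$: it produces \emph{two} maps $f_1=\phi\circ\varphi_1$ and $f_2=\phi\circ\varphi_2$ lying in a common irreducible family (parametrized by an irreducible component $Y$ of $\Mor_1(\P^1,X)$), where $\varphi_1$ is a free line with balanced splitting (obtained from generic smoothness of the evaluation map) and $\varphi_2$ is a line with $a_1\le -1$ (obtained from the stratification of the universal Fano scheme in \cite{hannah}, which also gives the $n$-dimensional locus of such lines). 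A direct computation shows $h^0(C,f_1^*\mathcal{T}_X)\ne h^0(C,f_2^*\mathcal{T}_X)$, and two points of one irreducible component with different tangent-space dimensions force non-smoothness, with the singular locus containing the whole family through the unbalanced lines. You would need to replace your global dimension estimate with an argument of this kind (or restrict to a regime where expected dimension is known) for the proof to go through.
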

\begin{rem}
    Under the assumptions of Theorem \ref{mainthm}, for an arbitrary smooth hypersurface, the moduli spaces $\mathcal{M}_{g}(X,e)$ are only known to be irreducible and have the expected dimension for $n$ exponentially large compared to $d$ by Theorem 1 of \cite{haseliu2024higher}. In particular, for $d\ge 3$, we are able to establish non-smoothness and a lower bound on the singular locus without knowing the reducibility or dimension of these moduli spaces.
\end{rem}

    

In Section \ref{smooth}, we also give examples of cases where $\mathcal{M}_{g}(X,e)$ is actually smooth. 

\begin{prop}\label{smoothcase}
    Let $e,g,d,n$ be positive integers and $X$ a smooth hypersurface in $\P^{n}$ of degree $d$. Then $\mathcal{M}_{g}(X,e)$ is smooth in the following cases: 

    \begin{tabular}{l l}
         $d=1$: & If $e\geq 0$ and $n=2$ (Proposition \ref{smooth_P1}) or $e \ge 2g-1$ and $n\geq 3$ (Proposition \ref{smooth_d=1}).\\
         $d=2$: & If $e\geq 0$ and $n=2$ (Proposition \ref{smooth_P1}) or $e\ge 4g-3$ and $n=3$ (Corollary \ref{smooth_n=3}).
    \end{tabular}
\end{prop}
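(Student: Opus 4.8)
The plan is to read off smoothness from the deformation theory of maps. At a point $[f\colon C\to X]$ of $\mathcal{M}_g(X,e)$ the tangent and obstruction spaces are the hypercohomology groups $\mathbb{H}^1$ and $\mathbb{H}^2$ of the two-term complex $E^\bullet=[T_C\xrightarrow{df}f^*T_X]$ in degrees $0$ and $1$. Since $C$ is a curve, the hypercohomology long exact sequence collapses to
\begin{equation*}
H^1(C,T_C)\xrightarrow{(df)_*}H^1(C,f^*T_X)\longrightarrow\mathbb{H}^2(E^\bullet)\longrightarrow 0,
\end{equation*}
and writing $\mathcal{Q}=\coker(df)$, the sheaf sequence $0\to T_C\xrightarrow{df}f^*T_X\to\mathcal{Q}\to 0$ identifies $\mathbb{H}^2(E^\bullet)\cong H^1(C,\mathcal{Q})$ (as $H^2$ vanishes on a curve). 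Thus $\mathbb{H}^2=0$, hence smoothness, follows once $H^1(C,\mathcal{Q})=0$; since $H^1$ annihilates the torsion part of $\mathcal{Q}$, it suffices that the locally free quotient of $\mathcal{Q}$ be nonspecial, or a fortiori that $H^1(C,f^*T_X)=0$. I would check this criterion case by case.

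For the two $n=2$ cases $X$ is a smooth conic, so $X\cong\P^1$, and for nonconstant $f$ the differential $df\colon T_C\to f^*T_{\P^1}$ is an injection of line bundles; hence $\mathcal{Q}$ is a torsion sheaf supported on the ramification divisor and $H^1(C,\mathcal{Q})=0$ \emph{automatically}, giving smoothness for every $e\ge 1$ regardless of how special $f$ is. The constant maps ($e=0$) I would treat separately by identifying $\mathcal{M}_g(X,0)\cong\mathcal{M}_g\times X$, which is smooth even though the naive $\mathbb{H}^2$ does not vanish. For $d=1$, $n\ge 3$ we have $X\cong\P^{n-1}$, and I would pull back the Euler sequence to $0\to\O_C\to f^*\O(1)^{\oplus n}\to f^*T_{\P^{n-1}}\to 0$; since $H^2(C,\O_C)=0$, the group $H^1(C,f^*T_{\P^{n-1}})$ is a quotient of $H^1(C,f^*\O(1))^{\oplus n}$, and as $\deg f^*\O(1)=e$ this vanishes as soon as $e>2g-2$, i.e.\ $e\ge 2g-1$, yielding the stated smoothness directly.

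The quadric surface case $d=2$, $n=3$ is the main obstacle. Here $X\cong\P^1\times\P^1$, so $f=(f_1,f_2)$ with $f_i^*T_{\P^1}$ of degree $2a_i$, where $a_1+a_2=e$. The naive vanishing $H^1(C,f^*T_X)=H^1(f_1^*T_{\P^1})\oplus H^1(f_2^*T_{\P^1})=0$ fails for unbalanced bidegrees (one $a_i$ small), so I would instead invoke the refined criterion $H^1(C,\mathcal{Q})=0$. When both $f_i$ are nonconstant, $df=(df_1,df_2)$ is generically injective, the torsion length of $\mathcal{Q}$ equals $\sum_p\min(r_1(p),r_2(p))$ in terms of the ramification orders $r_i(p)$ of the $f_i$, and Riemann--Hurwitz bounds this by $2g-2+2\min(a_1,a_2)\le 2g-2+e$; one then computes $\deg\mathcal{Q}_{\mathrm{free}}=2e+2g-2-\ell$ and checks that the stated bound $e\ge 4g-3$ comfortably forces $\deg\mathcal{Q}_{\mathrm{free}}>2g-2$, hence $H^1(C,\mathcal{Q})=0$. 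The genuinely delicate points are the degenerate maps where one factor $f_i$ is constant: there the cokernel acquires a trivial summand $\O_C$ with $H^1(C,\O_C)=\C^g\ne 0$, so $\mathbb{H}^2\ne 0$ and obstruction-vanishing cannot be used. At these loci smoothness must be established by hand—by producing a local product structure exactly as in the $e=0$ constant-map case—showing that the apparent obstruction is not realized. Handling this degenerate stratum, rather than the generic cohomology estimate, is where I expect the real work to lie.
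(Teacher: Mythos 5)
Your deformation-theoretic framework is correct (for nonconstant $f$ the obstruction space $\mathbb{H}^2$ of $[T_C\to f^*T_X]$ is $\coker\left(H^1(T_C)\to H^1(f^*T_X)\right)\cong H^1(C,\mathcal{Q})$, and its vanishing gives smoothness), and your $d=1$ cases essentially coincide with the paper's: for $\P^{n-1}$ the paper also pulls back the Euler sequence (Lemma \ref{Euler}), and for $\P^1$ the paper's proof of Proposition \ref{smooth_P1} is your torsion-cokernel argument in Serre-dual form, via the sheaf injection $f^*K_{\P^1}\hookrightarrow K_C$; you are in fact more careful than the paper about $e=0$, where that injection degenerates. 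Your quadric argument on components of bidegree $(a_1,a_2)$ with both $a_i\ge 1$ is also correct: the torsion of $\mathcal{Q}$ has length $\sum_p\min(r_1(p),r_2(p))\le 2g-2+2\min(a_1,a_2)$ by Riemann--Hurwitz, so $\deg\mathcal{Q}_{\mathrm{free}}\ge 2\max(a_1,a_2)\ge e>2g-2$; note this actually proves smoothness of those components already for $e\ge 2g-1$, sharper than the stated $4g-3$.

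The one genuine gap is the locus where one factor is constant, and there you both stop short and overestimate the difficulty. Bidegree is locally constant in flat families of maps from smooth curves, so maps whose second factor is constant do not form a "stratum" lying in the closure of the nondegenerate locus: they form the bidegree-$(e,0)$ and $(0,e)$ connected components of $\mathcal{M}_g(X,e)$, and each such component is globally (not just locally) isomorphic to $\mathcal{M}_g(\P^1,e)\times\P^1$, since a degree-$0$ map from a proper connected curve is a point of $\P^1$ and this identification works in families. Smoothness is then immediate from your own $n=2$ case, closing the gap in one sentence; no local analysis at an obstructed point is needed (and none could work, since at these points $\mathbb{H}^2\supseteq H^1(C,\O_C)\ne 0$ even though the stack is smooth). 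This is exactly where the paper's route differs from yours: Corollary \ref{smooth_n=3} never computes obstruction spaces on the quadric, but instead treats every bidegree component as a fibration, in effect $\mathcal{M}_g(\P^1,i)\times_{\mathcal{M}_g}\mathcal{M}_g(\P^1,e-i)$, whose base $\mathcal{M}_g(\P^1,e-i)$ is smooth for all degrees by Proposition \ref{smooth_P1} and whose fibers $\Mor_i(C,\P^1)$ are smooth once $i\ge 2g-1$ (Lemma \ref{fiberd=1smooth}); the hypothesis $e\ge 4g-3$ is there precisely to force $\max(i,e-i)\ge 2g-1$ on every component. The two arguments are complementary: the paper's fibration handles the constant-factor components for free but pays with the weaker threshold $4g-3$ on balanced bidegrees, where your cohomological estimate is sharper.
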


Our original motivation for writing this note was understanding the $d=2$ case for higher genus curves. Work in progress by the first author and Jakob Glas aims to investigate terminality of $\mathcal{M}_g\left(X,e\right)$ using methods from analytic number theory, which for genus zero is clear for $d=2$ (since the moduli spaces are smooth). 

In the genus zero case, when $n\geq 2d \geq 6$, Harris, Roth, and Starr \cite{HarrisRothStarrRatCurvesI} showed that $\mathcal{M}_{0}(X,e)$ is generically smooth for general $X$. In an unpublished note, Starr and Tian moreover proved a lower bound for the codimension of the singular locus of $\mathcal{M}_{0}(X,e)$ for general Fano hypersurfaces using an inductive bend-and-break strategy. Browning and Sawin \cite{BrowningSawinFree} used the circle method to give better bounds on the codimension of the singular locus of $\mathcal{M}_{0}(X,e)$ for any smooth hypersurface $X$ at the cost of a logarithmic upper bound on $d$ in terms of $n$. 

For $g\ge 1$, aside from the circle method-based approaches of \cite{haseliu2024higher} and forthcoming work of Sawin on the asymptotic for Waring's problem over function fields, in addition to recent work on topics related to Geometric Manin's Conjecture in \cite{GM4, GM3, GM2}, there appear to be relatively few results in the literature about singularities of $\mathcal{M}_g\left(X,e\right)$. Moreover, such results (e.g. \cite{GM3} in particular) focus on establishing a lower bound on the codimension of the singular locus, whereas we prove a lower bound on the dimension instead.

To prove Theorem \ref{mainthm}, we work with the simpler space $\Mor_e\left(C, X\right)$, which is the moduli space of maps of degree $e$ from a fixed smooth projective curve $C$ of genus $g$ to $X$.

More precisely, we show the following:

\begin{prop}\label{reductthm}
    Under the same conditions of Theorem \ref{mainthm} for $d\geq 2$, if $C$ is a general projective curve of genus $g\geq1$, then $\Mor_e\left(C, X\right)$ is not smooth. Moreover, the dimension of the singular locus is at least $2n+2e-g-4$ for $d=2$ and at least $n+2e-g-2$ for $d\ge 3$.
\end{prop}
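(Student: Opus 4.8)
The plan is to study $\Mor_e(C,X)$ through the deformation theory of a map $f\colon C\to X$, whose Zariski tangent space is $H^0(C,f^*T_X)$ and whose obstructions lie in $H^1(C,f^*T_X)$. The key structural observation is that $\Mor_e(C,X)$ is the zero locus of a section $\sigma$ of the bundle $\mathcal{E}$ on the smooth space $\Mor_e(C,\mathbb{P}^n)$ whose fiber at $[f]$ is $H^0(C,f^*\mathcal{O}(d))$, namely $\sigma([f])=F\circ f$ where $F$ cuts out $X$. Combining the Euler sequence with the conormal sequence of $X\subseteq\mathbb{P}^n$ and using $e\ge 2g-1$ to force $H^1(C,f^*T_{\mathbb{P}^n})=0$, the derivative $d\sigma_{[f]}$ becomes the connecting map $\phi\colon H^0(f^*T_{\mathbb{P}^n})\to H^0(f^*\mathcal{O}(d))$ with $\coker\phi\cong H^1(f^*T_X)$. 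Thus the first place smoothness can fail is exactly where $\phi$ is not surjective, i.e. where $H^1(f^*T_X)\ne 0$.

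To upgrade $H^1(f^*T_X)\ne 0$ into genuine non-smoothness I would use two complementary mechanisms. The robust one, requiring no global knowledge of $\Mor_e(C,X)$, is the primary obstruction: writing the germ of $\Mor_e(C,X)$ at $[f]$ as the zero locus of a Kuranishi map $\kappa\colon (H^0(f^*T_X),0)\to(H^1(f^*T_X),0)$ with vanishing linear part, its quadratic term is the Hessian map $w\mapsto[\tfrac12\,\mathrm{Hess}_F(w,w)]\in\coker\phi=H^1(f^*T_X)$. If this map is not identically zero then $\kappa\not\equiv 0$, so $\dim_{[f]}\Mor_e(C,X)\le \dim H^0(f^*T_X)-1<\dim T_{[f]}\Mor_e(C,X)$ and $[f]$ is singular; since this is entirely local, it is precisely what lets one conclude for $d\ge 3$ without knowing the dimension or irreducibility of the moduli space. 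For $d=2$ there is a cheaper route: smooth quadrics are homogeneous, so $\Mor_e(C,X)$ has the expected dimension $\chi=e(n+1-d)-(n-1)(g-1)$, whence $H^1(f^*T_X)\ne 0$ directly gives $\dim T_{[f]}=\chi+h^1>\chi=\dim_{[f]}$.

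For the explicit points I would take $f$ to be a degree-$e$ cover $C\to\ell\hookrightarrow X$ of a line $\ell\subseteq X$. On $\ell\cong\mathbb{P}^1$ one has $T_X|_\ell=\mathcal{O}(2)\oplus\bigoplus_i\mathcal{O}(b_i)$ with $\sum_i b_i=\deg N_{\ell/X}=n-1-d$, so pulling back by the degree-$e$ map and using $e\ge 2g-1$ gives $H^1(f^*T_X)=\bigoplus_{b_i\le 0}H^1(L^{\otimes b_i})$ with $L=f^*\mathcal{O}(1)$ of degree $e$. Because $\sum_i b_i=n-1-d<n-2$ forces some $b_i\le 0$ whenever $d\ge 3$ (and the homogeneous splitting $N_{\ell/X}=\mathcal{O}(1)^{n-3}\oplus\mathcal{O}$ gives $H^1(f^*T_X)=H^1(\mathcal{O}_C)=\mathbb{C}^g$ when $d=2$), we obtain $H^1(f^*T_X)\ne 0$. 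These maps sweep out the locus $S$ fibered over the Fano scheme of lines $F_1(X)$ with fibers $\Mor_e(C,\ell)\cong\Mor_e(C,\mathbb{P}^1)$ of dimension $2e+1-g$, so $\dim S=\dim F_1(X)+(2e+1-g)$; inserting $\dim F_1(X)=2n-5$ for $d=2$ and $\dim F_1(X)\ge 2n-d-3$ for $d\ge 3$ yields the asserted bounds $2n+2e-g-4$ and (for $n\ge d$) $n+2e-g-2$.

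The hard part will be the verification that these points are genuinely singular rather than smooth points of an excess-dimensional component; for $d\ge 3$ this means showing the Hessian map $w\mapsto[\mathrm{Hess}_F(w,w)]$ is nonzero in $H^1(f^*T_X)$, which I would do by choosing $w$ supported in a nonnegative normal summand $\mathcal{O}(b_i)$ of $N_{\ell/X}$ and checking that $\mathrm{Hess}_F(w,w)$ escapes $\im\phi$—a computation governed by the splitting type of $N_{\ell/X}$ and by the hypotheses $e\ge\max(2g-1,g+2)$ and $n\ge d-1$. The genuine subtlety, which I expect to be the main obstacle, is the boundary Fano/Calabi–Yau range $n$ close to $d-1$: there a generic line can have strictly negative normal bundle, in which case $\dim S=\dim H^0(f^*T_X)$ and the line-cover is in fact a smooth point, so one must instead produce singular points from a more delicate family (special lines with an unbalanced normal bundle, or a different family of rational curves) to recover the bound $n+2e-g-2$ for an arbitrary smooth $X$. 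Once genuine singularity is secured, the dimension estimate follows immediately from the fibration of $S$ over $F_1(X)$.
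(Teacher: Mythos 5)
Your construction of candidate singular points (degree-$e$ covers of lines in $X$ having a nonpositive summand in the restricted tangent bundle) is the same as the paper's Lemma \ref{mapexists}, and your dimension counts essentially match. The gaps are in the two steps that convert $H^1\left(C,f^*\mathcal{T}_X\right)\ne 0$ into actual non-smoothness. For $d=2$, your claim that homogeneity of a smooth quadric forces $\Mor_e(C,X)$ to have the expected dimension is false: the paper's Corollary \ref{smooth_n=3} and the remark following it give a counterexample. For $n=3$ the quadric is $\P^1\times\P^1$ (homogeneous), the line-covers are exactly the bidegree $(e,0)$ maps, they satisfy $h^1\left(C,f^*\mathcal{T}_X\right)=h^1\left(C,\mathcal{O}_C\right)=g\ne 0$, and yet they are \emph{smooth} points of an excess-dimensional component $\Mor_e\left(C,\P^1\right)\times\P^1$; homogeneity (convexity) gives unobstructedness only in genus zero. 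The paper instead imports irreducibility and expected dimension from Theorem 1 of \cite{haseliu2024higher}, proved via the circle method, and that is precisely where the hypotheses $n\ge 5$ and $e\ge 14$ (resp.\ $e\ge 35g/2-15/2$) enter. Your argument never uses these hypotheses and would "prove" non-smoothness at $(d,n)=(2,3)$, contradicting the paper -- a sure sign the step is wrong.

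For $d\ge 3$ you correctly identify the danger -- that a line-cover could be a smooth point of an excess-dimensional component -- but your proposed remedy, showing the quadratic (Hessian) term of the Kuranishi map is nonzero, is left entirely unexecuted; you yourself flag it as the main obstacle, especially for $n$ near $d$. The paper avoids obstruction calculus altogether. By Lemma \ref{balanced}, every irreducible component of $\Mor_1\left(\P^1,X\right)$ contains a line whose restricted tangent bundle is globally generated, and by Lemma \ref{unbalanced} (using the stratification of the universal Fano scheme from \cite{hannah}) there is a locus of dimension at least $n$ of lines with a summand $\mathcal{O}(a_1)$, $a_1\le -1$. Composing both with a fixed degree-$e$ map $\phi\colon C\to\P^1$ produces two points $f_1,f_2$ lying in a family over an irreducible component of $\Mor_1\left(\P^1,X\right)$, hence in a common irreducible component of $\Mor_e(C,X)$, with $h^0\left(C,f_2^*\mathcal{T}_X\right)>h^0\left(C,f_1^*\mathcal{T}_X\right)$; semicontinuity of tangent-space dimension then forces the unbalanced points to be singular, with no knowledge of the dimension or reducibility of $\Mor_e(C,X)$ needed. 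Relatedly, your dimension count for $d\ge 3$ fibers over all of $F_1(X)$, but only the unbalanced-line covers are shown (or expected) to be singular, so the count must run over the unbalanced locus -- exactly what the paper's Lemma \ref{unbalanced} supplies.
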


For $d\ge 2$, let us explain how Theorem \ref{mainthm} reduces to Proposition \ref{reductthm}.

\begin{lem}\label{reductlem}
    Under the same conditions of Theorem \ref{mainthm} for $d\geq 2$, if $\Mor_{e}\left(C,X\right)$ is not smooth for general $C\in \mathcal{M}_g$, then $\mathcal{M}_{g}\left(X,e\right)$ is not smooth. Moreover, if the singular locus of $\Mor_{e}\left(C,X\right)$ has dimension at least $m$, then the singular locus of $\mathcal{M}_g\left(X,e\right)$ has dimension at least $m+3g-3.$ 
\end{lem}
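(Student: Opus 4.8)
The plan is to study the forgetful morphism $\pi\colon\mathcal{M}_g(X,e)\to\mathcal{M}_g$ sending a map $(C,f)$ to the isomorphism class of its source $C$, whose fiber over a general point $[C]$ is $\Mor_e(C,X)$ (the automorphisms of $C$ being absorbed into the stack structure). Since $\mathcal{M}_g$ is smooth and irreducible and $\Mor_e(C,X)$ is nonempty for general $C$ under our hypotheses, $\pi$ is dominant, and the entire argument rests on generic smoothness, which is available because we work over $\mathbb{C}$.

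To run dimension counts and generic smoothness on a scheme rather than a stack, I would first replace $\mathcal{M}_g$ by a smooth irreducible atlas: choose a smooth variety $B$ carrying a versal family $\mathcal{C}\to B$ of smooth genus $g$ curves together with an action of $G=\PGL_N$ such that $\mathcal{M}_g=[B/G]$, so that $\dim B-\dim G=3g-3$. Forming the relative morphism scheme $\mathcal{H}=\Mor_e(\mathcal{C}/B,X)\to B$, whose fiber over $b$ is $\Mor_e(\mathcal{C}_b,X)$, one has $\mathcal{M}_g(X,e)=[\mathcal{H}/G]$ and the quotient map $q\colon\mathcal{H}\to\mathcal{M}_g(X,e)$ is smooth of relative dimension $\dim G$. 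Hence $\mathcal{H}$ is smooth exactly where $\mathcal{M}_g(X,e)$ is, $q^{-1}(\operatorname{Sing}\mathcal{M}_g(X,e))=\operatorname{Sing}\mathcal{H}$, and $\dim\operatorname{Sing}\mathcal{M}_g(X,e)=\dim\operatorname{Sing}\mathcal{H}-\dim G$; it therefore suffices to prove the statement for $\mathcal{H}\to B$ and subtract $\dim G$ at the end.

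For the first assertion I argue by contraposition: if $\mathcal{H}$ were smooth, then since $\mathcal{H}\to B$ is dominant with smooth source over $\mathbb{C}$, generic smoothness would make $\mathcal{H}\to B$ smooth over a dense open $U\subseteq B$, forcing the general fiber $\Mor_e(\mathcal{C}_b,X)$ to be smooth and contradicting the hypothesis; thus $\mathcal{H}$, and hence $\mathcal{M}_g(X,e)$, is singular. For the dimension bound, let $\Sigma\subseteq\mathcal{H}$ be the locus where $\mathcal{H}\to B$ fails to be smooth, i.e.\ the fiberwise singular locus. By hypothesis its fiber over a general $b$ is $\operatorname{Sing}\Mor_e(\mathcal{C}_b,X)$, of dimension $\geq m$, so a component of $\Sigma$ dominates $B$ and $\dim\Sigma\geq\dim B+m$. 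The crucial step is to upgrade ``singular in the fiber'' to ``singular in $\mathcal{H}$'': applying generic smoothness to the smooth locus $\mathcal{H}_{\mathrm{sm}}$, which still dominates $B$, over a dense open $U$ the map $\mathcal{H}_{\mathrm{sm}}\to B$ is smooth, so a point of $\Sigma$ lying over $U$ cannot lie in $\mathcal{H}_{\mathrm{sm}}$, for otherwise its fiber would be smooth there. Hence $\Sigma$ meets $\operatorname{Sing}\mathcal{H}$ in dimension $\geq\dim B+m$, giving $\dim\operatorname{Sing}\mathcal{M}_g(X,e)\geq(3g-3)+m$.

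The main obstacle, and the reason the hypothesis insists on the \emph{general} curve, is that a singular fiber need not lie in the singular locus of the total space: already $\{xy=t\}\to\mathbb{A}^1$ has a nodal central fiber but smooth total space. What rescues us is that here the general fiber is singular, so generic smoothness in characteristic zero converts fiberwise non-smoothness over a dense open of the base into genuine non-smoothness of $\mathcal{H}$. The remaining work is the routine bookkeeping of the $\PGL_N$-action to recover the clean shift $3g-3$, together with a careful enough choice of versal family that the identification of the fibers of $\pi$ with $\Mor_e(C,X)$ is valid for general $C$.
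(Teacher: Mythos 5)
Your core mechanism is exactly the paper's: fiber the forgetful morphism over the moduli of curves, use that we are over $\C$ to invoke generic smoothness on the target, and apply it to the \emph{smooth locus} of the total space so that the singular locus of a general fiber is forced into the singular locus of the total space, picking up $\dim\mathcal{M}_g=3g-3$ extra dimensions. The paper runs this directly at the level of stacks (citing a stacky version of generic smoothness), whereas you interpose a presentation of $\mathcal{M}_g$ by a scheme; that part of your argument is where the problem lies.

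The gap: you require a smooth variety $B$ with a $\PGL_N$-action such that $\mathcal{M}_g=[B/\PGL_N]$. This exists for $g\geq 2$ (tricanonically embedded curves in the Hilbert scheme), but \emph{not} for $g=1$, and $g=1$ is one of the cases the lemma must cover (Theorem \ref{mainthm} allows $g=1$ for both $d=2$ and $d\geq 3$). Indeed, in any quotient stack $[B/G]$ with $B$ a scheme and $G$ affine, the automorphism group of a point is a stabilizer, hence a closed --- in particular affine --- subgroup scheme of $G$; but a genus $1$ curve (with no marked point) has automorphism group containing the translations by its Jacobian, an elliptic curve, which is proper and positive-dimensional, hence not affine. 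So $\mathcal{M}_1$ is not of the form $[B/\PGL_N]$, and your reduction collapses exactly in that case. The fix is cheap and preserves your structure: you never need a global quotient presentation, only a smooth surjective chart $B\to\mathcal{M}_g$ from a smooth scheme of some relative dimension $r$ (this exists for any smooth algebraic stack), together with the observation that the forgetful morphism $\mathcal{M}_g(X,e)\to\mathcal{M}_g$ is representable --- an automorphism of a pair $(C,f)$ inducing the identity on $C$ is the identity --- so that $\mathcal{H}=B\times_{\mathcal{M}_g}\mathcal{M}_g(X,e)$ is honestly the relative morphism scheme over $B$. Then your argument runs verbatim with $\dim G$ replaced by $r$: one gets $\dim B=3g-3+r$, the pulled-back singular locus has dimension at least $\dim B+m$, and subtracting $r$ gives $m+3g-3$. (Alternatively, one can treat $g=1$ by passing through $\mathcal{M}_{1,1}$ and descending along the relative-dimension-one forgetful map.) With that repair, your proof is correct and is essentially the paper's argument in scheme-theoretic clothing.
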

\begin{proof}
Recall that we have a canonical map $\vartheta\colon \mathcal{M}_g\left(X, e\right) \to \mathcal{M}_g$ that on points forgets the map to $X$. In particular, the fiber above a smooth projective curve $C$ is given by $\Mor_e\left(C,X\right)$. Moreover, by Lemma \ref{constructmap} and the non-emptiness of the Fano scheme of lines on $X$ by Theorem 8 of \cite{fanolines}, it follows that $\vartheta$ is dominant.

For the sake of contradiction, suppose $\mathcal{M}_g\left(X, e\right)$ is smooth. By Tag 0E84 of \cite{key-2}, $\mathcal{M}_g$ is smooth. By Exercise 10.40 of \cite{AG}, often called "generic smoothness on the target," it follows that there is an open dense sub-stack $U$ of $\mathcal{M}_g$ such that the restriction $\vartheta^{-1}(U) \to U$ is smooth. Note that we are using the assumption that we are working over $\C$. Moreover, since $\vartheta$ is dominant, the pre-image $\vartheta^{-1}(U)$ is non-empty. In particular, for any closed point $x\in U$ we have $\vartheta^{-1}(x)$ is smooth.

By hypothesis, for a general projective curve $C$ of genus $g$, we know $\Mor_e\left(C,X\right)$ is not smooth. In particular, since the intersection of two dense open sub-stacks is also a dense open sub-stack, it follows that there is a closed point in $U$ such that the fiber of $\vartheta$ above this point is not smooth, which is a contradiction.  

Finally, we lower bound the dimension of the singular locus. Let $V\subset \mathcal{M}_g\left(X,e\right)$ be the smooth locus. Applying generic smoothness on the target to the restriction $\vartheta|_{V}\colon V\to\mathcal{M}_{g}$, a general fiber of $\vartheta|_{V}$ is smooth. Then the singular locus of a general fiber of $\vartheta$ is contained in $\mathcal{M}_{g}(X,e)\setminus V$, so the singular locus of $\mathcal{M}_{g}(X,e)$ intersected with a general fiber has dimension at least $m$. Thus, the result follows from $\dim\mathcal{M}_g = 3g-3$.
\end{proof}

In Section \ref{nonsmoothd=2}, we explain the proof of Theorem \ref{mainthm} for the case $d=2$. Theorem 1 of \cite{haseliu2024higher} shows that the relevant moduli spaces are irreducible and of the expected dimension, so it suffices to construct maps $f\colon C \to X$ such that the restricted tangent bundle $f^* \mathcal{T}_X$ has non-vanishing first cohomology. These maps arise from choosing a line on $X$, then composing with maps of chosen degree $C\to \P^1$. We note that Theorem \ref{mainthm} and Proposition \ref{smoothcase} miss the case $(d,n) = (2,4)$, which we expect a more efficient version of the circle method (as used in \cite{haseliu2024higher}) to show that the relevant moduli space has the expected dimension and hence fail to be smooth by using the same argument for $n\ge 5$. 

In Section \ref{dbigger3}, we explain the proof of Theorem \ref{mainthm} for the case $d\ge 3$. We use a similar technique to construct two maps $f:C\to X$ with different values of $h^{0}(C,f^{*}\mathcal{T}_{X})$ and show that they lie in the same irreducible component of $\Mor_{e}(C,X)$. We make use of the stratification of the Fano scheme of lines on $X$ found in \cite{hannah}.

\ack{ Thank you very much to Johan de Jong and Will Sawin for suggestions to improve our results and to Jason Starr for helpful comments about the literature. Both authors were partially supported by National Science Foundation Grant Number DGE-2036197.
}

\section{Smooth Examples}\label{smooth}
Let $C$ be a smooth projective curve of genus $g\ge 1$ and $X$ a smooth hypersurface in $\P^n$ of degree $d\ge 1$. 

Let us first address the case $d=1$, i.e. when $X=\P^{n-1}$.
\begin{lem}\label{Euler}
   Suppose $e \ge 2g-1.$ For any degree $e$ map $f\colon C \to \P^{n-1}$, we have  $H^1\left(C, f^*\mathcal{T}_{\P^{n-1}}\right)=0$.
\end{lem}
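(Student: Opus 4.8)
The plan is to pull back the Euler sequence on $\P^{n-1}$ and read off the desired vanishing from the long exact sequence in cohomology. Recall the Euler exact sequence
\[
0 \to \mathcal{O}_{\P^{n-1}} \to \mathcal{O}_{\P^{n-1}}(1)^{\oplus n} \to \mathcal{T}_{\P^{n-1}} \to 0.
\]
Since $f$ is a morphism and the terms are locally free, applying $f^*$ preserves exactness, yielding
\[
0 \to \mathcal{O}_{C} \to L^{\oplus n} \to f^*\mathcal{T}_{\P^{n-1}} \to 0,
\]
where I write $L \coloneqq f^*\mathcal{O}_{\P^{n-1}}(1)$. By definition of the degree of a map to projective space, $L$ is a line bundle on $C$ with $\deg L = e$.

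Next I would take the associated long exact sequence in cohomology, whose relevant portion reads
\[
H^1\!\left(C, L^{\oplus n}\right) \to H^1\!\left(C, f^*\mathcal{T}_{\P^{n-1}}\right) \to H^2\!\left(C, \mathcal{O}_{C}\right).
\]
Because $C$ is a smooth projective curve, $H^2(C, \mathcal{O}_C) = 0$ for dimension reasons, so it suffices to prove $H^1(C, L^{\oplus n}) = H^1(C, L)^{\oplus n} = 0$, i.e.\ that $H^1(C, L)$ vanishes.

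Finally I would invoke Serre duality: $H^1(C, L) \cong H^0(C, K_C \otimes L^{-1})^{\vee}$, where $K_C$ is the canonical bundle with $\deg K_C = 2g-2$. The hypothesis $e \ge 2g-1$ gives $\deg(K_C \otimes L^{-1}) = 2g-2-e \le -1 < 0$, and a line bundle of negative degree on a projective curve has no nonzero global sections. Hence $H^0(C, K_C \otimes L^{-1}) = 0$, so $H^1(C, L) = 0$ and the vanishing $H^1(C, f^*\mathcal{T}_{\P^{n-1}}) = 0$ follows. There is no serious obstacle here: the only point requiring a moment's care is confirming that $\deg L = e$ and that the numerical inequality $e \ge 2g-1$ is exactly what forces the dual line bundle into negative degree, which is where the stated bound is sharp.
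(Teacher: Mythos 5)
Your proposal is correct and follows essentially the same route as the paper: pull back the Euler sequence, observe that $H^1\left(C, f^*\mathcal{O}(1)\right)^{\oplus n}$ surjects onto $H^1\left(C, f^*\mathcal{T}_{\P^{n-1}}\right)$ (you justify the surjection by $H^2(C,\mathcal{O}_C)=0$, which the paper leaves implicit), and then kill $H^1(C, f^*\mathcal{O}(1))$ by Serre duality and the degree bound $2g-2-e<0$. No gaps; the argument matches the paper's proof.
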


\begin{proof}
Pulling back the Euler exact sequence $0\to \mathcal{O} \to \mathcal{O}(1)^{\oplus{n}} \to \mathcal{T}_{\P^{n-1}} \to 0$ along $f$ and taking cohomology tells us that $H^1\left(C,f^*\mathcal{O}(1)\right)^{\oplus{n}}$ surjects onto $H^1\left(C, f^*\mathcal{T}_{\P^{n-1}}\right)$. But Serre duality gives $h^1\left(C,f^*\mathcal{O}(1)\right) = h^0\left(C, f^*\mathcal{O}(-1) \otimes K_C\right),$ where $K_C$ is the canonical bundle of $C$. The degree of $f^*\mathcal{O}(-1) \otimes K_C$ is $-e+2g-2<0$, which gives $H^1\left(C, f^*\mathcal{T}_{\P^{n-1}}\right)=0$. 

\end{proof}
\begin{lem}\label{fiberd=1smooth}
    If $e \ge 2g-1$ and $g\geq 1$, then $\Mor_e\left(C, \P^{n-1}\right)$ is smooth.
\end{lem}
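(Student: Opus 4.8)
The plan is to combine the standard deformation theory of the scheme $\Mor_e\left(C,\P^{n-1}\right)$ with the cohomology vanishing already established in Lemma \ref{Euler}. First I would recall the basic deformation-theoretic description of morphism spaces: for a smooth projective curve $C$ and a smooth projective target $Y$, at a closed point $[f]\in\Mor(C,Y)$ the Zariski tangent space is canonically identified with $H^0\left(C,f^*\mathcal{T}_Y\right)$, while the obstructions to deforming $f$ lie in $H^1\left(C,f^*\mathcal{T}_Y\right)$. Consequently, whenever $H^1\left(C,f^*\mathcal{T}_Y\right)=0$, the obstruction space vanishes and $\Mor(C,Y)$ is smooth at $[f]$, of dimension equal to $\chi\left(C,f^*\mathcal{T}_Y\right)$.

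Next I would observe that every closed point of $\Mor_e\left(C,\P^{n-1}\right)$ is precisely a degree $e$ morphism $f\colon C\to\P^{n-1}$, so Lemma \ref{Euler} applies verbatim at each such point: since $e\ge 2g-1$, we obtain $H^1\left(C,f^*\mathcal{T}_{\P^{n-1}}\right)=0$. Thus the obstruction group vanishes at \emph{every} point, and the smoothness criterion above yields smoothness pointwise, hence smoothness of the whole space $\Mor_e\left(C,\P^{n-1}\right)$.

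The only point requiring care is that the vanishing must hold uniformly, at every point of the space rather than merely at a general one. This is exactly why Lemma \ref{Euler} was phrased for \emph{any} degree $e$ map, so there is no genuine obstacle here; the argument is essentially formal once the deformation theory and the uniform $H^1$ vanishing are in hand. As a side remark, Riemann--Roch then shows every point has the same local dimension $\chi\left(C,f^*\mathcal{T}_{\P^{n-1}}\right)=en+(n-1)(1-g)$, though this dimension computation is not needed for the smoothness statement itself.
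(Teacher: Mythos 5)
Your proposal is correct and is essentially the paper's own argument: the paper proves this lemma by citing Lemma \ref{Euler} together with the standard deformation theory of morphism spaces (Chapter 2 of \cite{debarre}), which is exactly the tangent/obstruction framework you spell out, applied at every closed point. Your explicit remark that the $H^1$ vanishing holds at \emph{every} degree $e$ map, not just a general one, is precisely the point that makes the citation of Lemma \ref{Euler} suffice.
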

\begin{proof}
    This follows immediately from Lemma \ref{Euler} and Chapter 2 of \cite{debarre}.
\end{proof}
\begin{prop}\label{smooth_d=1}
    If $e \ge 2g-1$, $g\geq 1$, and $n\ge 3$, then $\mathcal{M}_g\left(\P^{n-1},e\right)$ is smooth. 
\end{prop}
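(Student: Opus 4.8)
The plan is to upgrade the fibrewise smoothness of Lemma \ref{fiberd=1smooth} to smoothness of the total space by proving that the forgetful morphism $\vartheta\colon\mathcal{M}_g(\P^{n-1},e)\to\mathcal{M}_g$ of Lemma \ref{reductlem} is smooth. Since $\mathcal{M}_g$ is smooth (Tag 0E84 of \cite{key-2}) and the composite of smooth morphisms is smooth, this immediately yields that $\mathcal{M}_g(\P^{n-1},e)$ is smooth. The content beyond fibrewise smoothness is the flatness of $\vartheta$, which I would extract from the vanishing of the relative obstruction group rather than verify directly.

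Concretely, I would use the deformation theory of maps from a varying curve (Chapter 2 of \cite{debarre}): at a point $[f\colon C\to\P^{n-1}]$ the deformations and obstructions of the pair $(C,f)$ are the first and second hypercohomology of the two-term complex $[\mathcal{T}_C\xrightarrow{df}f^*\mathcal{T}_{\P^{n-1}}]$. Because $H^2(C,\mathcal{T}_C)=0$ for a curve, the hypercohomology spectral sequence identifies the obstruction space with a quotient of $H^1(C,f^*\mathcal{T}_{\P^{n-1}})$, which vanishes by Lemma \ref{Euler} once $e\ge 2g-1$. Equivalently, $H^1(C,f^*\mathcal{T}_{\P^{n-1}})$ is exactly the obstruction space for the relative deformation theory of $\vartheta$ (deforming $f$ with $C$ held fixed); its vanishing at every point makes $\vartheta$ formally smooth, hence smooth since it is locally of finite presentation. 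The relative dimension $h^0(C,f^*\mathcal{T}_{\P^{n-1}})=ne+(n-1)(1-g)$ is then constant by Riemann--Roch, which serves as a consistency check on flatness.

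The only step requiring care is the stacky bookkeeping: I would confirm that smoothness of $\vartheta$ can be tested pointwise on the obstruction space in the Deligne--Mumford setting, and that $\vartheta$ is well-behaved enough (locally of finite presentation, and representable over the relevant atlas) for "smooth over a smooth base is smooth" to apply verbatim. This is routine, and the genuinely substantive input --- the vanishing of $H^1(C,f^*\mathcal{T}_{\P^{n-1}})$ --- is already supplied by Lemma \ref{Euler}. Thus I expect no real obstacle, only the need to keep the absolute and relative obstruction theories carefully aligned.
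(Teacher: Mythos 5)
Your proposal is correct, but it takes a genuinely different route from the paper. The paper argues \emph{absolutely}: it computes the tangent space as the first hypercohomology of the two-term complex $\mathcal{T}_C\to f^*\mathcal{T}_{\P^{n-1}}$, uses Lemma \ref{Euler} to kill the $H^1\left(C,f^*\mathcal{T}_{\P^{n-1}}\right)$ entry of the spectral sequence, checks that $H^0\left(C,\mathcal{T}_C\right)\to H^0\left(C,f^*\mathcal{T}_{\P^{n-1}}\right)$ is injective (automatic for $g>1$; for $g=1$ because $df$ is nonzero on fibers), and concludes that the tangent space dimension is the constant $\chi\left(C,f^*\mathcal{T}_{\P^{n-1}}\right)-\chi\left(C,\mathcal{T}_C\right)$, whence smoothness. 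You instead argue \emph{relatively}: the vanishing of $H^1\left(C,f^*\mathcal{T}_{\P^{n-1}}\right)$, which is the obstruction space for the relative deformation theory of the forgetful map $\vartheta$, makes $\vartheta$ a smooth morphism, and you finish with smoothness of $\mathcal{M}_g$. Two imprecisions to fix in your write-up: the relative problem is extending $f$ along a \emph{prescribed} deformation of $C$, not ``deforming $f$ with $C$ held fixed'' (the obstruction group is the same, so no harm done); and for $g=1$ the base $\mathcal{M}_1$ is an Artin stack rather than Deligne--Mumford, since elliptic curves have positive-dimensional automorphism groups, so the bookkeeping must be done for algebraic stacks, where the infinitesimal criterion and ``smooth over a smooth base is smooth'' still hold. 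Your route is more modular --- it is the relative obstruction theory familiar from spaces of stable maps --- and it dispenses with both the injectivity check and the dimension-constancy argument. What the paper's absolute computation buys is flexibility: in Proposition \ref{smooth_P1}, where $e$ may be small, $H^1\left(C,f^*\mathcal{T}_{\P^1}\right)$ need not vanish, so your relative argument produces nothing there (and $\vartheta$ can genuinely fail to be smooth, since fibers $\Mor_e\left(C,\P^1\right)$ can be singular), yet the same spectral-sequence analysis still yields smoothness of the total space by showing instead that $H^1\left(C,\mathcal{T}_C\right)\to H^1\left(C,f^*\mathcal{T}_{\P^1}\right)$ is surjective.
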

\begin{proof}
    The tangent space to $\mathcal{M}_{g}(\P^{n-1},e)$ at a point $(C,f)$ is 
    $$\mathcal{T}_{(C,f)}\mathcal{M}_{g}(\P^{n-1},e)=R^{1}\Gamma\left(F^\bullet\right),$$ where $F^\bullet$ is the complex $\mathcal{T}_{C}\to f^*\mathcal{T}_{\P^{n-1}}$ placed in degrees 0 and 1, and $R^i\Gamma$ denotes the $i$th hypercohomology vector space. 

    Recall that we have a hypercohomology spectral sequence $E_1^{i,j} = R^j\Gamma\left(F^i\right)\Rightarrow R^{i+j}\Gamma\left(F^\bullet\right)$. Applying Lemma \ref{Euler}, the $E_1$-page in our situation has only three possibly non-zero terms. In particular, we have
    $$\dim\mathcal{T}_{(C,f)}\mathcal{M}_{g}(\P^{n-1},e)=h^{1}(C,\mathcal{T}_{C})+\dim \coker(H^{0}(C,\mathcal{T}_{C})\to H^{0}(C,f^{*}\mathcal{T}_{\P^{n-1}})).$$
    We want to show $H^{0}(C,\mathcal{T}_{C})\to H^{0}(C,f^{*}\mathcal{T}_{\P^{n-1}})$ is injective. If $g>1$ then $H^{0}(C,\mathcal{T}_{C})=0$, and if $g=1$ then $h^{0}(C,\mathcal{T}_{C})=h^{0}(C,\mathcal{O})=1$, so it suffices to show that the map is nonzero in the case $g=1$. But since $f$ is not constant, $\mathcal{T}_{C}\to f^{*}\mathcal{T}_{\P^{n-1}}$ is not the zero map on each fiber, and so the induced map on global sections is nonzero.   

    Thus,
    \begin{align*}
        \dim\mathcal{T}_{(C,f)}\mathcal{M}_{g}(\P^{n-1},e) &=
        h^{1}(C,\mathcal{T}_{C})+h^{0}(C,f^{*}\mathcal{T}_{\P^{n-1}})-h^{0}(C,\mathcal{T}_C)\\&= \chi(C,f^{*}\mathcal{T}_{\P^{n-1}}) -\chi(C,\mathcal{T}_{C}),   
    \end{align*}
    and by Riemann-Roch, the right hand side depends only on the genus of $C$ and the degree of $f$, proving smoothness of $\mathcal{M}_g\left(\P^{n-1},e\right)$. 
\end{proof}

\begin{prop}\label{smooth_P1}
    If $e\geq0$ and $g\geq1$, then $\mathcal{M}_{g}(\P^{1},e)$ is smooth.
\end{prop}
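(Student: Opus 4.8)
The plan is to prove smoothness by showing that the deformation theory of $\mathcal{M}_g(\P^1,e)$ is unobstructed at every point, set up exactly as in the proof of Proposition \ref{smooth_d=1}. At a point $(C,f)$ the tangent and obstruction spaces are the hypercohomology groups $R^1\Gamma(F^\bullet)$ and $R^2\Gamma(F^\bullet)$ of the two-term complex $F^\bullet=[\mathcal{T}_C\xrightarrow{df}f^*\mathcal{T}_{\P^1}]$ placed in degrees $0$ and $1$. Since vanishing of the obstruction space at every point forces the stack to be smooth, it suffices to prove $R^2\Gamma(F^\bullet)=0$ for all $(C,f)$.

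Next I would run the hypercohomology spectral sequence $E_1^{i,j}=H^j(C,F^i)\Rightarrow R^{i+j}\Gamma(F^\bullet)$, exactly as in Proposition \ref{smooth_d=1}. Because $C$ is a curve, $H^{\ge 2}$ of any coherent sheaf vanishes, so the $E_1$-page lives in the $2\times 2$ box $0\le i,j\le 1$ and there are no differentials beyond $d_1$. The only term contributing to $R^2\Gamma$ is therefore $E_2^{1,1}=\coker\!\big(H^1(C,\mathcal{T}_C)\xrightarrow{df}H^1(C,f^*\mathcal{T}_{\P^1})\big)$, and the whole problem reduces to showing that $df$ is surjective on $H^1$.

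This is where the target being a curve is decisive. Since $\mathcal{T}_{\P^1}\cong\mathcal{O}(2)$ is a line bundle, $f^*\mathcal{T}_{\P^1}$ is a line bundle on $C$, and for $e\ge 1$ the map $f$ is nonconstant, so $df\colon\mathcal{T}_C\to f^*\mathcal{T}_{\P^1}$ is a nonzero morphism of line bundles. It is then injective with torsion cokernel $\tau$ supported on the ramification divisor of $f$, yielding a short exact sequence $0\to\mathcal{T}_C\xrightarrow{df}f^*\mathcal{T}_{\P^1}\to\tau\to 0$. The associated long exact sequence ends with $H^1(C,f^*\mathcal{T}_{\P^1})\to H^1(C,\tau)\to 0$, and $H^1(C,\tau)=0$ because $\tau$ has finite support. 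Hence $df$ is surjective on $H^1$, the cokernel vanishes, $R^2\Gamma(F^\bullet)=0$, and $\mathcal{M}_g(\P^1,e)$ is smooth for every $e\ge 1$. The case $e=0$ I would dispatch trivially, since then every map is constant and $\mathcal{M}_g(\P^1,0)\cong\mathcal{M}_g\times\P^1$ is smooth.

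The point to get right — more a subtlety than a genuine obstacle — is the contrast with the higher-dimensional target. In Proposition \ref{smooth_d=1} smoothness came from forcing $H^1(C,f^*\mathcal{T}_{\P^{n-1}})$ \emph{itself} to vanish, which needed the hypothesis $e\ge 2g-1$; for a curve target this vanishing genuinely fails once $e$ is small, so one cannot argue that the obstruction \emph{group} is zero. What must vanish is instead the \emph{cokernel} appearing in $R^2\Gamma$, and the ramification sequence supplies exactly this surjectivity uniformly for all $e\ge 1$, independently of $g$.
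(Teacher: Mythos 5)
Your proof is correct, and its skeleton is the same as the paper's: both run the hypercohomology spectral sequence for $F^\bullet=[\mathcal{T}_C\to f^*\mathcal{T}_{\P^1}]$ and reduce everything to the surjectivity of $H^1(C,\mathcal{T}_C)\to H^1(C,f^*\mathcal{T}_{\P^1})$. Where you diverge is in the finishing move and the framing. The paper applies Serre duality, turning the cokernel statement into the injectivity of $H^0\left(C,f^*K_{\P^1}\otimes K_C\right)\to H^0\left(C,K_C^{\otimes 2}\right)$, which follows because $f^*K_{\P^1}\to K_C$ is an injection of sheaves and $H^0$ is left exact; you instead argue covariantly, via the ramification sequence $0\to\mathcal{T}_C\to f^*\mathcal{T}_{\P^1}\to\tau\to 0$ and the vanishing of $H^1$ of the torsion sheaf $\tau$. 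These are Serre-dual renditions of the same geometric fact (nonvanishing of $df$ for a nonconstant map of smooth curves), so neither is deeper, but the bookkeeping differs. Your framing via unobstructedness ($R^2\Gamma(F^\bullet)=0$ at every point implies smoothness) buys you something: you never need the injectivity of $H^0(C,\mathcal{T}_C)\to H^0(C,f^*\mathcal{T}_{\P^1})$, which the paper's approach (constancy of the tangent space dimension, inherited from Proposition \ref{smooth_d=1}) does require. The price is that your framing genuinely breaks at $e=0$: for constant maps the obstruction space is $H^1(C,\mathcal{O}_C)\neq 0$, so smoothness cannot be deduced from unobstructedness there, and your separate identification $\mathcal{M}_g(\P^1,0)\cong\mathcal{M}_g\times\P^1$ is not optional pedantry but a necessary case. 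You handled this correctly; note that the paper's own proof silently assumes $f$ nonconstant as well, since for a constant map the sheaf map $f^*K_{\P^1}\to K_C$ is zero rather than injective, so your treatment is if anything slightly more complete.
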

\begin{proof}
    The argument is essentially the same as the proof of Proposition \ref{smooth_d=1}, except that we cannot use Lemma \ref{Euler} to conclude that $H^1\left(C, f^*\mathcal{T}_{\P^1}\right) = 0$ when $e<2g-1$. Instead, our $E_1$-page of the hypercohomology spectral sequence has four possibly non-zero terms. 
    
    Running the same argument as above, it suffices to show that the cokernel of $H^1\left(C, \mathcal{T}_C\right)\to H^1\left(C, f^*\mathcal{T}_{\P^1}\right)$ is zero. By Serre duality, this is equivalent to showing that kernel of the dual map $H^0\left(C, f^*K_{\P^1} \otimes K_C\right) \to H^0\left(C,K_C^{\otimes 2}\right)$ is zero. But we always have an injection of sheaves $f^*K_{\P^1} \to K_C$, which implies we have an injection of sheaves after tensoring by $K_C$, and hence on global sections as well.
\end{proof}
\begin{rem}
    Note that Proposition \ref{smooth_P1} covers the cases of $d=1$ and $d=2$ for $n=2$, since a conic in $\P^{2}$ is isomorphic to $\P^1$. 
\end{rem}
 

For $d=2$ and $n=3$, note that the hypersurface $X$ is isomorphic to $\P^1\times \P^1$. We may reduce to the previous proposition again.
\begin{cor}\label{smooth_n=3}
    If $e \ge 4g-3$, $d = 2$, and $n=3$, then $\mathcal{M}_g\left(X,e\right)$ is smooth.
\end{cor}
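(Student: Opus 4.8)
The plan is to leverage the isomorphism $X\cong\P^1\times\P^1$ to reduce to Proposition \ref{smooth_P1}. Under the Segre embedding the hyperplane class $\O_X(1)=\O_{\P^3}(1)|_X$ corresponds to $\O(1,1)$, so giving a degree $e$ map $f\colon C\to X$ is the same as giving a pair $(f_1,f_2)$ of maps $f_i\colon C\to\P^1$ of degrees $a,b$ with $a+b=\deg f^*\O(1,1)=e$. Since the bidegree $(a,b)$ is locally constant in flat families, this decomposes the moduli space into open-and-closed substacks
$$\mathcal{M}_g(X,e)=\coprod_{a+b=e}\mathcal{M}_g(\P^1,a)\times_{\mathcal{M}_g}\mathcal{M}_g(\P^1,b),$$
and it suffices to prove each piece is smooth.

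Fix a bidegree $(a,b)$ with $a+b=e$, and assume without loss of generality $a\ge b$. The hypothesis $e\ge 4g-3$ forces $a\ge\lceil e/2\rceil\ge 2g-1\ge g$ for $g\ge1$. I would then study the forgetful morphism $\Phi\colon\mathcal{M}_g(\P^1,a)\times_{\mathcal{M}_g}\mathcal{M}_g(\P^1,b)\to\mathcal{M}_g(\P^1,b)$ that remembers $(C,f_2)$ and forgets $f_1$; its fiber over $(C,f_2)$ is $\Mor_a(C,\P^1)$. By the relative deformation theory of maps (Chapter 2 of \cite{debarre}), the relative obstruction of $\Phi$ at $(C,f_1,f_2)$ lies in $H^1(C,f_1^*\mathcal{T}_{\P^1})=H^1(C,f_1^*\O(2))$, and since $\deg f_1^*\O(2)=2a\ge 2g>2g-2$ this vanishes exactly as in Lemma \ref{Euler}. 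Hence $\Phi$ is smooth.

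To conclude, note that $\mathcal{M}_g(\P^1,b)$ is smooth by Proposition \ref{smooth_P1}, which holds for every $b\ge0$. A smooth morphism onto a smooth target over $\C$ has smooth total space (composition of smooth morphisms is smooth), so the fiber product is smooth, and therefore $\mathcal{M}_g(X,e)$ is smooth.

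The main subtlety to flag is that one must forget the \emph{larger}-degree factor rather than attempt to show directly that the obstruction group $\mathbb{H}^2$ of the complex $[\mathcal{T}_C\to f^*\mathcal{T}_X]$ vanishes. Indeed the latter approach, in the style of the proof of Proposition \ref{smooth_P1}, breaks precisely when the smaller degree is $b=0$: there $f_2$ is constant, $f_2^*\mathcal{T}_{\P^1}\cong\O_C$, so $H^1(C,f_2^*\mathcal{T}_{\P^1})=H^1(C,\O_C)\neq0$ produces a nonvanishing $\mathbb{H}^2$ even though the moduli space is smooth at such points. Routing smoothness through $\Phi$ sidesteps this entirely, because smoothness of $\Phi$ only requires $H^1$ of the larger factor to vanish, which is guaranteed by $\max(a,b)\ge g$; this is where the bound on $e$ enters, and it is comfortably satisfied under $e\ge 4g-3$.
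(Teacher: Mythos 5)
Your proof is correct and takes essentially the same route as the paper: decompose $\mathcal{M}_g(X,e)$ by bidegree, project each piece to the smaller-degree factor $\mathcal{M}_g(\mathbb{P}^1,b)$ (smooth by Proposition \ref{smooth_P1}), and use vanishing of $H^1$ of the pullback of $\mathcal{T}_{\mathbb{P}^1}$ along the larger-degree map. Your smoothness-of-$\Phi$ step via relative obstruction theory is a slightly more careful rendering of the paper's ``base smooth (Proposition \ref{smooth_P1}) plus fibers smooth (Lemma \ref{fiberd=1smooth}) implies total space smooth,'' and your closing remark correctly identifies why one must project away from the larger factor rather than argue with the absolute obstruction space.
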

\begin{rem}
    In this case $\mathcal{M}_g\left(X,e\right)$ is neither irreducible nor of the expected dimension!
\end{rem}
\begin{proof}[Proof of Corollary \ref{smooth_n=3}]
    Since $X\cong \P^1\times \P^1$, note that $\mathcal{M}_g\left(X,e\right)$ has components parameterized by bidegrees $(i,e-i)$ of maps $C\to \P^1\times \P^1$. For each bidegree $(i,e-i)$, we can view the corresponding component as a projection over $\mathcal{M}_g\left(\P^1,i\right)$ with fibers $\Mor_{e-i}\left(C,\P^1\right)$, or alternatively as a projection over $\mathcal{M}_g\left(\P^1,e-i\right)$ with fibers $\Mor_{i}\left(C,\P^1\right)$. Without loss of generality, suppose $i\ge e-i$, which implies $i\ge 2g-1$. By Proposition \ref{smooth_P1}, $\mathcal{M}_g\left(\P^1,e-i\right)$ is smooth. By Lemma \ref{fiberd=1smooth}, $\Mor_i\left(C, \P^1\right)$ is also smooth. It follows that the total space $\mathcal{M}_g\left(X,e\right)$ is smooth. 
\end{proof}

\section{Non-smoothness of $\mathcal{M}_g\left(X,e\right)$ for $d=2$}\label{nonsmoothd=2}
In this section, let $X\hookrightarrow \P^n$ be a smooth hypersurface of degree $d$ and $C$ a general projective curve of genus $g\ge 1.$ 

We proceed with the proofs of Proposition \ref{reductthm} and Theorem \ref{mainthm} for $d=2$.

\begin{lem}\label{constructmap}
    For any integer $a\ge g/2 +1$, there exists a morphism $C \to \P^1$ of degree $a$. The locus of such maps has dimension $-g+2a-2.$ 
\end{lem}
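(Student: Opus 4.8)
The plan is to establish the two assertions of Lemma~\ref{constructmap} separately: first the existence of a degree-$a$ map $C \to \P^1$ for $a \ge g/2 + 1$, and then the dimension count for the locus of such maps inside $\Mor_a(C,\P^1)$. For existence, I would reduce the problem to finding a $g^1_a$ (a linear system of degree $a$ and projective dimension $1$) on $C$, since any such pencil that is base-point-free induces the desired degree-$a$ morphism to $\P^1$. The natural tool here is Brill--Noether theory: the expected dimension of the variety $W^1_a(C)$ parametrizing such linear series is given by the Brill--Noether number $\rho = g - (r+1)(g-d+r)$ with $r=1$ and $d=a$, namely $\rho = g - 2(g-a+1) = 2a - g - 2$. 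Since $C$ is a \emph{general} curve of genus $g$, the Brill--Noether General Position / Existence Theorem guarantees that $W^1_a(C)$ is nonempty precisely when $\rho \ge 0$, i.e.\ when $2a - g - 2 \ge 0$, which rearranges to exactly $a \ge g/2 + 1$. This matches the hypothesis, so the existence half falls out of standard Brill--Noether theory once I confirm the general pencil is base-point-free (a general $g^1_a$ on a general curve has no base points, as base points would reduce the degree and a base-point-free $g^1_{a'}$ with $a' < a$ would violate the $\rho$-bound again).

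For the dimension statement, the key observation is that the dimension $-g + 2a - 2$ in the lemma is exactly the Brill--Noether number $\rho = 2a - g - 2$ computed above, so the claim is that the locus of degree-$a$ maps $C \to \P^1$ has dimension equal to $\rho$. I would argue this by relating the space of maps to $W^1_a(C)$ via the standard fibration: a map $f \colon C \to \P^1$ of degree $a$ determines a base-point-free $g^1_a$ together with a choice of basis (up to scalar) for the two-dimensional space of sections, i.e.\ a choice of isomorphism of $\P^1$-targets. Concretely, the locus of such maps fibers over (an open subset of) $W^1_a(C)$, whose general fiber corresponds to the choice of the pencil's identification with $\P^1$; one then adds the dimension of the automorphism data and subtracts appropriately. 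Since the Brill--Noether theorem for general $C$ also asserts that $W^1_a(C)$ is \emph{smooth of the expected dimension} $\rho$ away from $W^2_a(C)$ (which is empty or lower-dimensional for general $C$), the computation reduces to bookkeeping of these contributions to recover $\dim = 2a - g - 2$.

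The main obstacle I anticipate is the dimension bookkeeping in the second part rather than the existence in the first. The subtlety is making precise exactly what ``the locus of such maps'' means and how its dimension relates to $\dim W^1_a(C) = \rho$: one must account for the $\PGL_2$-worth of reparametrizations of the target $\P^1$ versus the scaling ambiguity already quotiented out in passing from sections to linear series, and verify these contributions cancel so that the map locus has the same dimension $\rho$ as $W^1_a(C)$ itself. I would handle this by carefully tracking the fibers of the natural forgetful map from the space of morphisms to $W^1_a(C)$, using that a base-point-free pencil determines its associated morphism up to the choice of an isomorphism of the induced $\P^1$ with the target, and confirming that the two-dimensional space of sections (modulo scalars, giving a $\P^1$) matches the target identification so that no net dimension is added. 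Since the paper only needs this dimension for a later additive estimate, I expect the cleanest route is to invoke the Brill--Noether dimension of $W^1_a(C)$ directly and note the map locus is birational (or a bundle of relative dimension zero after quotienting) over it, yielding $\dim = 2a - g - 2 = -g + 2a - 2$ as claimed.
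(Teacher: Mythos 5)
Your strategy is the same as the paper's (Brill--Noether theory via Theorem V.1.5 of \cite{ACGH}: on a general curve, $G^1_a(C)$ is nonempty of pure dimension $\rho=-g+2a-2$ exactly when $\rho\ge0$), but two of your steps fail as stated. The first is base-point-freeness. You claim a base point is impossible because it ``would reduce the degree and a base-point-free $g^1_{a'}$ with $a'<a$ would violate the $\rho$-bound.'' It would not: for any $a'$ with $g/2+1\le a'<a$ one has $\rho(a')=2a'-g-2\ge0$, so such pencils genuinely exist on a general curve and no contradiction arises (e.g.\ $g=4$, $a=4$: a general curve carries a base-point-free $g^1_3$ since $\rho(3)=0$). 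Your argument only rules out base points when $a$ is the minimal degree $\lceil g/2+1\rceil$. What is needed --- and what the paper's terse observation $\rho(a)>\rho(a-1)$ is encoding --- is a dimension count: every pencil with a base point lies in the image of $G^1_{a-1}(C)\times C\to G^1_a(C)$ (add a base point), a locus of dimension at most $\rho(a-1)+1=\rho(a)-1<\rho(a)$; since $G^1_a(C)$ has pure dimension $\rho(a)$, a general pencil is base-point-free.

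The second gap is the $\PGL_2$ bookkeeping that you yourself flag as the main obstacle: your resolution of it is wrong. The fiber of the forgetful map $\Mor_a(C,\P^1)\to G^1_a(C)$ over a base-point-free pencil $(L,V)$ is the set of isomorphisms $\P(V^\vee)\xrightarrow{\sim}\P^1$, a $\PGL_2$-torsor of dimension $3$; the scaling ambiguity you invoke is the one-dimensional group of scalars already divided out in $\PGL_2=\GL_2/\mathbb{G}_m$, so it cannot cancel these three dimensions, and ``no net dimension is added'' is false. The honest space of degree-$a$ morphisms therefore has dimension $\rho+3=2a-g+1$, not $\rho$. The count $-g+2a-2$ in Lemma \ref{constructmap} is the dimension of the family of maps up to $\Aut(\P^1)$ --- equivalently, of the pencils themselves --- and that is the convention the paper actually uses downstream: in the proofs of Proposition \ref{reductthm}, the extra $3=\dim\PGL_2$ is attached to the space of \emph{parametrized} lines $\P^1\hookrightarrow X$, so that composites $C\to\P^1\hookrightarrow X$ are counted exactly once. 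A proof asserting zero-dimensional fibers over $W^1_a(C)$ establishes a statement that is off by $3$ and would break those later additive counts.
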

\begin{proof}
    Theorem V.1.5 of \cite{ACGH} implies that given $a$ and $r$ such that the Brill-Noether number $g-(r+1)(g-a+r)\geq0$, there is a linear series $(D,V)$ on $C$ with $\deg D=a$ and $\dim V=r+1$, i.e. a $g^r_d.$ The same theorem tells us that the locus of such $g^r_d$'s has pure dimension equal to the Brill-Noether number. 
    
    For $r=1$, since $a\geq g/2+1 $ we have $g-2(g-a+1)\geq 0$. Note that $g-2(g-a+1) > g-2(g-(a-1)+1)$, from which it follows that there is a $g^1_d$ that is moreover base-point free. This linear system defines a morphism of degree $a$ and the locus of such maps has dimension $-g+2a-2.$
\end{proof}

\begin{lem}\label{mapexists}
    Assuming the conditions of Proposition \ref{reductthm}, there exists a map $f\colon C\to X$ of degree $e\ge 1$ such that $H^1\left(C,f^*\mathcal{T}_X\right)\ne 0.$
\end{lem}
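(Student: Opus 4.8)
The plan is to construct $f$ explicitly by factoring through a line on $X$, then compute $H^1$ of the restricted tangent bundle along this line. Since $X$ is a smooth quadric ($d=2$) or higher degree hypersurface with $n\ge d-1$, the Fano scheme of lines on $X$ is non-empty (by Theorem 8 of \cite{fanolines}), so I may fix a line $\ell \hookrightarrow X$, i.e.\ a closed immersion $\iota\colon \P^1 \cong \ell \hookrightarrow X$. Using Lemma \ref{constructmap}, for any $a \ge g/2+1$ there is a degree $a$ morphism $\varphi\colon C \to \P^1$; I take $a = e$ so that the composite $f = \iota \circ \varphi\colon C \to X$ has degree $e$. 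The conditions on $e$ in Proposition \ref{reductthm} ($e \ge \max(2g-1,g+2)$ for $d\ge 3$, and the explicit bounds for $d=2$) are exactly what is needed to guarantee $e \ge g/2+1$ so that such a $\varphi$ exists.

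The key computation is to show $H^1(C, f^*\mathcal{T}_X) \ne 0$. First I would understand $\iota^*\mathcal{T}_X = \mathcal{T}_X|_\ell$ as a vector bundle on $\ell \cong \P^1$. The normal bundle sequence $0 \to \mathcal{T}_\ell \to \mathcal{T}_X|_\ell \to N_{\ell/X} \to 0$ together with standard knowledge of the splitting type of the restricted tangent bundle of a hypersurface along a line lets me write $\mathcal{T}_X|_\ell \cong \bigoplus_i \mathcal{O}_{\P^1}(a_i)$; the $\mathcal{T}_\ell = \mathcal{O}(2)$ summand is positive, but the remaining summands include negative or low-degree pieces once $d \ge 2$ (for a line on a degree $d$ hypersurface, the restricted tangent bundle has summands that fail to be sufficiently positive, reflecting the non-convexity). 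Pulling back along $\varphi$ multiplies each degree by $a=e$, giving $f^*\mathcal{T}_X \cong \bigoplus_i \mathcal{O}_C(\varphi^* H)^{\otimes a_i}$ where $\varphi^*\mathcal{O}_{\P^1}(1)$ has degree $e$. The point is that at least one summand $\mathcal{O}_{\P^1}(a_i)$ with $a_i \le 0$ pulls back to a line bundle of degree $a_i \cdot e$, and after twisting by the genus contribution the relevant summand has $H^1 \ne 0$: by Serre duality $h^1(C, \varphi^*\mathcal{O}(a_i)) = h^0(C, \varphi^*\mathcal{O}(-a_i)\otimes K_C)$, and since $C$ is general with $g \ge 1$ this global section space is nonzero for the non-positive summands.

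The main obstacle I anticipate is pinning down the precise splitting type of $\mathcal{T}_X|_\ell$ and verifying that \emph{some} summand forces $h^1 \ne 0$ after pullback, rather than having all the negativity absorbed. For $d=2$ I would use that a smooth quadric contains lines whose restricted tangent bundle splits with at least one summand of degree $0$ or less once $n \ge 5$; pulling this back by a degree $e$ map to a genus $g \ge 1$ curve yields a summand isomorphic to $\varphi^*\mathcal{O}_{\P^1}$ (degree $0$), whose $H^1$ is $h^0(K_C) = g \ge 1$, hence nonzero. This is really where the genus hypothesis enters: on $\P^1$ itself a degree $0$ bundle has vanishing $H^1$, but on a positive genus curve it need not, and this is the source of the non-smoothness. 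I would therefore track the exact summand degrees carefully, identify the summand that pulls back to something of degree $\le 2g-2$, and invoke Riemann--Roch or Serre duality to exhibit a nonzero class in $H^1(C, f^*\mathcal{T}_X)$; combined with the irreducibility and expected-dimension results of \cite{haseliu2024higher}, a point $(C,f)$ with $H^1 \ne 0$ has tangent space strictly larger than the expected dimension and hence is a singular point.
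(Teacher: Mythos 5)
Your strategy is the same as the paper's (compose a line $\iota\colon \P^1 \hookrightarrow X$ with a degree $e$ map $\phi\colon C\to\P^1$ supplied by Lemma \ref{constructmap}, split $\iota^*\mathcal{T}_X$ by Birkhoff--Grothendieck, and exhibit a summand whose pullback has nonzero $H^1$), but the step you explicitly defer --- ``I would use that a smooth quadric contains lines whose restricted tangent bundle splits with at least one summand of degree $0$ or less'' --- is precisely the mathematical content of the lemma, and your proposal contains no argument for it. The omission is not cosmetic. Writing $\iota^*\mathcal{T}_X \cong \bigoplus_{i=1}^{n-1}\mathcal{O}(a_i)$, the rank is $n-1$ and, by adjunction ($\det\mathcal{T}_X = \mathcal{O}_X(n+1-d)$), the degree is also $n-1$ when $d=2$; so the balanced type $\mathcal{O}(1)^{\oplus(n-1)}$ is perfectly consistent with rank and degree, and if it occurred the lemma would be \emph{false} for maps through that line: under the hypotheses of Proposition \ref{reductthm} one has $e > 2g-2$, hence $h^1\left(C,\phi^*\mathcal{O}(1)\right) = h^0\left(C, K_C\otimes\phi^*\mathcal{O}(-1)\right) = 0$ and $h^1\left(C,\phi^*\mathcal{O}(2)\right)=0$, so every summand of $f^*\mathcal{T}_X$ would have vanishing $H^1$. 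The balanced case must therefore be actively excluded, not merely doubted.

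The paper does this in two moves that are missing from your sketch. First, a slope argument: since $\deg \iota^*\mathcal{T}_X = n-1 = \rk \iota^*\mathcal{T}_X$, the slope is $1$, so either all $a_i = 1$ or $a_1 \le 0$. Second, the case $\iota^*\mathcal{T}_X\cong\mathcal{O}(1)^{\oplus(n-1)}$ is impossible because the differential $d\iota\colon \mathcal{T}_{\P^1}\cong\mathcal{O}(2)\to\iota^*\mathcal{T}_X$ is a nonzero map of sheaves, while $\Hom\left(\mathcal{O}(2),\mathcal{O}(1)^{\oplus(n-1)}\right)=0$. This forces $a_1\le 0$, after which your Serre duality computation goes through; note, though, that generality of $C$ plays no role there: for $a_1\le -1$ the line bundle $\phi^*\mathcal{O}(a_1)$ has negative degree, so $h^1 = -\chi = -a_1e+g-1>0$ by Riemann--Roch, and for $a_1=0$ one has $h^1\left(C,\mathcal{O}_C\right)=g\ge 1$, valid for every smooth curve of genus $g\ge1$ (generality is needed only for Lemma \ref{constructmap}). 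Finally, your closing sentence invoking \cite{haseliu2024higher} to deduce a singular point belongs to the proof of Proposition \ref{reductthm}, not to this lemma.
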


\begin{proof}
By Theorem 8 of \cite{fanolines}, $X$ contains lines, and suppose $\varphi \colon \P^1 \hookrightarrow X$ is the embedding of one such line in $X$. By the Birkhoff-Grothendieck Theorem, we know $\varphi^* \mathcal{T}_X$ splits as a direct sum of line bundles, say 
\[\varphi^* \mathcal{T}_X\cong \bigoplus_{i=1}^{n-1}\mathcal{O}\left(a_i\right) \] with $a_{n-1}\ge a_{n-2} \ge \cdots \ge a_{1}$.

The adjunction formula tells us that $\det\left(\mathcal{T}_X\right)=\mathcal{O}_X\left(n+1-d\right)$. Then, we have $\deg\left(\varphi^* \mathcal{T}_X\right)=\deg(\varphi) \cdot \deg \left(\mathcal{T}_X\right) = n+1-d.$ Since $d=2$, we obtain $\deg\left(\varphi^*\mathcal{T}_X\right) = n-1.$

Since $\varphi^*\mathcal{T}_X$ has rank $n-1$, we see that the slope of $\varphi^*\mathcal{T}_X = 1$, which implies that $a_1=1$ or $a_1\le 0$. The former is possible only when $a_{n-1}=\cdots = a_1=1$.

Let us deal with this case first, i.e. when $\varphi^* \mathcal{T}_X \cong \mathcal{O}(1)^{\oplus(n-1)}.$  Since $\varphi$ is a line, the differential map $d\varphi \colon \mathcal{T}_{\P^1} \to \varphi^* \mathcal{T}_X$ is non-zero, which means we have a non-zero map $\mathcal{O}(2) \to \mathcal{O}(1)^{\oplus(n-1)},$ which is evidently impossible.

This means that we must be in the situation where $a_1 \le 0$. 

By Lemma \ref{constructmap}, there exists a map, say $\phi\colon C \to \P^1$, such that $\deg\left(\phi\right) = e$. Let us then define $f = \phi \circ \varphi$.

Then, $f^*\mathcal{T}_X$ splits into a direct sum of line bundles as well, so $H^1\left(C, \phi^*\mathcal{O}\left(a_1\right)\right)$ is a direct summand of $H^1\left(C, f^*\mathcal{T}_X\right).$ In particular, it suffices to show that $H^1\left(C, \phi^*\mathcal{O}\left(a_1\right)\right)$ is non-zero.

Suppose momentarily that the genus $g\ge 2$ or that $a_1 \le -1$. Then, by Riemann-Roch, note that the Euler characteristic $\chi\left(C, \phi^*\mathcal{O}\left(a_1\right)\right) = ea_1 - g + 1 < 0$. This implies that $H^1\left(C, \phi^*\mathcal{O}\left(a_1\right)\right)$ is non-zero, and we are done.

Finally, for $g=1$ and $a_1 = 0$, we have $H^1\left(C, \phi^*\mathcal{O}\left(a_1\right)\right) = H^1\left(C, \mathcal{O}\right),$ which is also non-zero by Serre duality and the fact that $H^0\left(C, \mathcal{O}\right) \ne 0$. 
\end{proof}
\begin{rem}
    The same argument above works for $d\ge 3$, but the argument in Section \ref{dbigger3} gives a better bound for $n$ in Theorem \ref{mainthm}.
\end{rem}

\begin{rem}
    We can show that Lemma \ref{constructmap}, and hence Proposition \ref{reductthm}, holds true for an arbitrary smooth projective curve $C$ of genus $g \ge 1$ provided $e \geq \max\left(2g-1, 2\right)$. 
\end{rem}
\begin{proof}[Proof of Proposition \ref{reductthm} for $d=2$]
Theorem 1 of \cite{haseliu2024higher} shows that $\Mor_e\left(C,X\right)$ is irreducible and of the expected dimension $h^{0}(C,f^{*}\mathcal{T}_{X})-h^{1}(C,f^{*}\mathcal{T}_{X})$. However, Lemma \ref{mapexists} shows that there is a map $f\colon C\to X$ of degree $e$ such that $H^1\left(C,f^*\mathcal{T}_X\right) \ne 0,$ which means that the dimension $h^{0}(C,f^{*}\mathcal{T}_{X})$ of the tangent space at $[f]$ is strictly larger than the dimension of $\Mor_e\left(C,X\right)$ at $[f].$ Hence, $\Mor_e\left(C,X\right)$ is not smooth. 

Note that we can bound the dimension of the singular locus from below by the sum of the dimension of the locus of choices of degree $e$ maps $C\to \P^1$ and the dimension of the degree one maps $\P^1\hookrightarrow X$. The former is precisely the Brill-Noether number $-g+2e-2$ by Lemma \ref{mapexists}, and the latter is $2n-5 + 3$ by Theorem 1.3 of \cite{beheshti_riedl} and the fact that $\Aut(\P^{1})=\PGL_{2}$, which has dimension $3$. The result follows.
\end{proof}
\begin{proof}[Proof of Theorem \ref{mainthm} for $d=2$]
    Follows immediately from Proposition \ref{reductthm} for $d=2$ and Lemma \ref{reductlem}.
\end{proof}


    
    
    
    

\section{Non-smoothness of $\mathcal{M}_g\left(X,e\right)$ for $d\ge 3$}\label{dbigger3}
In this section, let $X\hookrightarrow \P^{n}$ be a smooth hypersurface of degree $d$ and $C$ a smooth projective curve of genus $g\geq 1$.

We proceed with the proofs of Proposition \ref{reductthm} and Theorem \ref{mainthm} for $d\ge 3$.

\begin{lem}\label{decomp}
    Let $\varphi\in\Mor_{1}(\P^{1},X)$. The restricted tangent bundle $\varphi^* \mathcal{T}_X$ is isomorphic to $\mathcal{O}(2)\oplus \bigoplus_{i=1}^{n-2}\mathcal{O}\left(a_i\right)$ with $1\ge a_{n-2}\ge a_{n-3} \ge \cdots \ge a_{1}$.
\end{lem}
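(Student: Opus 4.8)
The plan is to exhibit $\varphi^{*}\mathcal{T}_{X}$ as a subbundle of the restriction of $\mathcal{T}_{\P^{n}}$ to the line $L=\varphi(\P^{1})$, whose splitting type is standard, and to read off the constraint from there. By the Birkhoff--Grothendieck theorem, $\varphi^{*}\mathcal{T}_{X}\cong\bigoplus_{i=1}^{n-1}\mathcal{O}(b_{i})$ with $b_{n-1}\ge\cdots\ge b_{1}$ (the rank equals $\dim X=n-1$), so it suffices to prove that exactly one $b_{i}$ equals $2$ and the remaining satisfy $b_{i}\le1$.

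Next I would pull back the normal bundle sequence $0\to\mathcal{T}_{X}\to\mathcal{T}_{\P^{n}}|_{X}\to\mathcal{O}_{X}(d)\to0$ along $\varphi$. Since the inclusion $\mathcal{T}_{X}\hookrightarrow\mathcal{T}_{\P^{n}}|_{X}$ has locally free quotient $\mathcal{O}_{X}(d)$, it is locally split, so its pullback stays fiberwise injective; using the standard restriction $\varphi^{*}\mathcal{T}_{\P^{n}}\cong\mathcal{O}(2)\oplus\mathcal{O}(1)^{\oplus(n-1)}$ of the tangent bundle of $\P^{n}$ to a line, I obtain a subbundle inclusion $\varphi^{*}\mathcal{T}_{X}\hookrightarrow\mathcal{O}(2)\oplus\mathcal{O}(1)^{\oplus(n-1)}$.

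The heart of the argument is a fiberwise analysis of this inclusion $\bigoplus_{i}\mathcal{O}(b_{i})\hookrightarrow\mathcal{O}(2)\oplus\mathcal{O}(1)^{\oplus(n-1)}$. Because there is no nonzero map $\mathcal{O}(b)\to\mathcal{O}(1)$ when $b\ge2$, every summand with $b_{i}\ge2$ must map entirely into the single $\mathcal{O}(2)$ factor; as a fiberwise injective bundle map cannot embed a bundle of rank $\ge 2$ into the line bundle $\mathcal{O}(2)$, at most one $b_{i}$ can be $\ge2$, and such a $b_{i}$ must also satisfy $b_{i}\le 2$. On the other hand, since $\varphi$ is an immersed line, the differential $d\varphi\colon\mathcal{T}_{\P^{1}}=\mathcal{O}(2)\to\varphi^{*}\mathcal{T}_{X}$ is nonzero, which forces some $b_{i}\ge 2$. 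Combining, exactly one $b_{i}$ equals $2$ and all others are $\le1$; taking $a_{i}=b_{i}$ for the remaining indices gives the claim.

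The only delicate points I anticipate are purely formal: checking that pulling back the normal sequence preserves fiberwise (not merely generic) injectivity, and that $d\varphi$ is a genuine subbundle inclusion. Both hold because $\varphi$ is a closed immersion of a line into the smooth variety $X$, so all the relevant maps are locally split. The adjunction computation $\sum_{i}b_{i}=\deg\varphi^{*}\mathcal{T}_{X}=n+1-d$ serves as a consistency check but is not logically required.
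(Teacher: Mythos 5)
Your proof is correct, but it takes a genuinely different route from the paper's. The paper uses the tangent/normal bundle sequence of the line inside $X$, namely $0\to\mathcal{T}_{\P^1}\to\varphi^*\mathcal{T}_X\to\mathcal{N}_{\P^1/X}\to0$: it first bounds the splitting type of $\mathcal{N}_{\P^1/X}$ by $1$ using the injection $\mathcal{N}_{\P^1/X}\hookrightarrow\mathcal{N}_{\P^1/\P^n}\cong\mathcal{O}(1)^{\oplus(n-1)}$, and then observes that $\Ext^1\left(\mathcal{N}_{\P^1/X},\mathcal{T}_{\P^1}\right)\cong H^1\left(\P^1,\bigoplus_i\mathcal{O}(2-a_i)\right)=0$, so the sequence splits and the $\mathcal{O}(2)$ summand is $\mathcal{T}_{\P^1}$ itself. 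You instead restrict the normal bundle sequence of $X\subset\P^n$ to the line, realize $\varphi^*\mathcal{T}_X$ as a subbundle of $\mathcal{T}_{\P^n}|_L\cong\mathcal{O}(2)\oplus\mathcal{O}(1)^{\oplus(n-1)}$, and produce the $\mathcal{O}(2)$ summand by combining fiberwise injectivity with $d\varphi\neq0$; all your steps (pullbacks of locally split sequences stay fiberwise injective, Hom-vanishing between line bundles, the rank argument excluding two summands of degree $\ge2$) are sound, and your approach is arguably more elementary since it avoids extension-splitting entirely. What the paper's argument buys in exchange is the canonical identification $\varphi^*\mathcal{T}_X\cong\mathcal{T}_{\P^1}\oplus\mathcal{N}_{\P^1/X}$, i.e.\ that the $a_i$ of the lemma are precisely the splitting type of the normal bundle of the line in $X$. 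This matters downstream: the strata $F_{\vec{a}}(X)$ appearing in Lemma \ref{unbalanced} and in the proof of Proposition \ref{reductthm} are defined by the splitting type of $\mathcal{N}_{L/X}$, and the computation of $h^0(C,f_j^*\mathcal{T}_X)$ there silently uses this identification. With your proof one would need to add the short remark that, for degree reasons, $d\varphi$ maps $\mathcal{T}_{\P^1}$ isomorphically onto the $\mathcal{O}(2)$ summand, so its cokernel $\mathcal{N}_{\P^1/X}$ is isomorphic to $\bigoplus_i\mathcal{O}(a_i)$ --- an easy addendum, but one the paper's proof yields for free.
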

\begin{proof}
    Recall that the normal bundle of a line in $\P^n$ is isomorphic to $\mathcal{O}(1)^{\oplus(n-1)}$. Write $\mathcal{N}_{\P^1/X}$ as $\bigoplus_{i=1}^{n-2}\mathcal{O}\left(a_i\right)$. Since $\mathcal{N}_{\P^1/X}$ injects into $\mathcal{N}_{\P^1/\P^n}$, it follows that each $a_i$ is at most 1. Consider the normal bundle exact sequence $0\to \mathcal{T}_{\P^1} \to \varphi^*\mathcal{T}_X \to \mathcal{N}_{\P^1/X}\to 0$. Since $\mathcal{T}_{\P^1}\cong \mathcal{O}(2)$ and each $a_i\le 1$, it follows that $\Ext^1\left(\mathcal{N}_{\P^1/X}, \mathcal{T}_{\P^1}\right)\cong H^1\left(\P^1, \bigoplus_{i=1}^{n-2}\mathcal{O}\left(2 - a_i\right)\right)=0$, which means that the exact sequence splits. The result follows.
\end{proof}

\begin{lem}\label{balanced}
    Assuming $d\leq n-1$. On each irreducible component of the space of maps $\Mor_1(\P^{1},X)$, there is a map $\varphi\colon\P^{1}\hookrightarrow X$ such that $a_{1}\geq 0$ (in the notation of Lemma \ref{decomp}).
\end{lem}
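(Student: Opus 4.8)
The plan is to analyze the possible splitting types of $\varphi^* \mathcal{T}_X$ as $\varphi$ ranges over a fixed irreducible component $Z$ of $\Mor_1(\P^1, X)$, and show that the ``most balanced'' splitting type (the one maximizing $a_1$) is achieved on $Z$, with $a_1 \geq 0$ forced by the numerics. By Lemma \ref{decomp}, for every $\varphi$ we have $\varphi^*\mathcal{T}_X \cong \mathcal{O}(2) \oplus \bigoplus_{i=1}^{n-2}\mathcal{O}(a_i)$ with all $a_i \leq 1$, and taking degrees via adjunction gives $\sum_{i=1}^{n-2} a_i = \deg(\varphi^*\mathcal{T}_X) - 2 = (n+1-d) - 2 = n - 1 - d$. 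So the average of the $a_i$ is $(n-1-d)/(n-2)$, which under the hypothesis $d \leq n-1$ is strictly greater than $-1$; hence a genuinely balanced splitting would have $a_1 \geq 0$. The content of the lemma is that within each irreducible component one can actually \emph{find} such a balanced map, rather than being stuck with an unbalanced special splitting type everywhere on that component.

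First I would invoke the stratification of the Fano scheme of lines by splitting type of the normal (equivalently restricted tangent) bundle, due to \cite{hannah}, which the introduction flags as the key input for this section. Writing $F_1(X) = \Mor_1(\P^1,X)/\PGL_2$ (or working directly with $\Mor_1(\P^1,X)$, which fibers over $F_1(X)$ with smooth $\PGL_2$-fibers, so irreducible components correspond), this stratification expresses each stratum's expected codimension in terms of the splitting type $\mathbf{a} = (a_1,\dots,a_{n-2})$. The more unbalanced the splitting, the higher the (expected) codimension of the corresponding stratum, so the generic point of any irreducible component $Z$ lands in the stratum of smallest codimension meeting $Z$. The plan is to argue that this generic splitting type on $Z$ must be balanced enough to give $a_1 \geq 0$: if the generic $\varphi \in Z$ had $a_1 \leq -1$, then every map in $Z$ would have $a_1 \leq -1$ (since having some $a_i \leq -1$, equivalently $h^1(\P^1, \varphi^*\mathcal{T}_X \otimes \mathcal{O}(-1)) \neq 0$ or a similar upper-semicontinuous condition, is a closed condition), contradicting the codimension bound from \cite{hannah} for such a stratum against $\dim Z$.

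The cleanest route I would try is a semicontinuity / dimension-count argument that avoids quoting the full stratification machinery. Consider the upper-semicontinuous function $\varphi \mapsto h^1(\P^1, (\varphi^*\mathcal{T}_X)(-1))$; since $(\varphi^*\mathcal{T}_X)(-1) = \mathcal{O}(1)\oplus\bigoplus\mathcal{O}(a_i - 1)$, this $h^1$ counts exactly the number of $a_i$ with $a_i \leq -1$, weighted, so $a_1 \geq 0$ on $Z$ is equivalent to this $h^1$ vanishing generically on $Z$. Equivalently, $a_1 \geq 0$ iff $H^1(\P^1, \varphi^*\mathcal{T}_X(-1)) = 0$. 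Thus the generic value of $a_1$ on $Z$ is maximal, and I would combine the degree constraint $\sum a_i = n-1-d$ with the codimension estimate from \cite{hannah} that an unbalanced locus (with $a_1 \leq -1$, so some mass pushed below zero and correspondingly some $a_i = 1$ to compensate) has codimension too large to fill a whole component; the bound $d \leq n-1$ is precisely what makes the ``all $a_i \in \{0,1\}$'' balanced type have nonnegative expected dimension and hence be the generic type.

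The main obstacle I anticipate is making the codimension comparison rigorous and uniform across \emph{every} irreducible component, not just the ``main'' component of expected dimension. A priori $F_1(X)$ could have extra components of large dimension sitting entirely inside an unbalanced stratum, and one must rule this out. The honest fix is to lean directly on the precise statement in \cite{hannah}: I would locate their result giving, for each splitting type $\mathbf{a}$, an upper bound on the dimension of the corresponding stratum (or a lower bound on its codimension in $F_1(X)$), and then check arithmetically that any stratum with $a_1 \leq -1$ has dimension strictly smaller than the dimension of any component it could be contained in --- forcing that component's generic splitting type to satisfy $a_1 \geq 0$. Verifying that the inequality goes the right way under the hypothesis $d \leq n-1$ is the crux, and I expect it reduces to the elementary observation that redistributing from an $a_i = -1$ up to $a_i = 0$ (compensated elsewhere) strictly increases the expected dimension, so the balanced type is dimensionally dominant.
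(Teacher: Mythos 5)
There is a genuine gap, and it sits precisely at the step you yourself flagged as the crux. Your plan needs an \emph{upper} bound on the dimension of an unbalanced stratum $F_{\vec{a}}(X)$ (equivalently a \emph{lower} bound on its codimension in $F(X)$), valid for an \emph{arbitrary} smooth hypersurface $X$, in order to conclude that no irreducible component can lie entirely in the locus $\{a_1\le -1\}$. But Theorem 1.1 of \cite{hannah}, as quoted and used in this paper, gives the opposite inequality: $\codim_{F(X)}F_{\vec{a}}(X)\le \sum_{i<j}\max\{a_j-a_i-1,0\}$, i.e., every nonempty stratum is at least as large as expected. (That is the direction exploited in Lemma \ref{unbalanced}, where one wants the unbalanced locus to be big.) Exact codimensions --- hence upper bounds on stratum dimensions --- hold for the universal stratum over the space of all smooth hypersurfaces, and therefore only for a \emph{general} $X$; Lemma \ref{balanced} must hold for every smooth $X$, since Theorem \ref{mainthm} makes no genericity assumption. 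Worse, your comparison also presupposes control of $\dim Z$ for every component $Z$ of $F(X)$, and the assertion that every component has the expected dimension $2n-d-3$ for all smooth $X$ with $d\le n-1$ is essentially the Debarre--de Jong conjecture, open in general. So both dimension estimates your argument rests on are unavailable: ``expected codimension'' numerics say nothing about a special smooth $X$. The semicontinuity reduction in your proposal is fine ($a_1\ge 0$ iff $H^1(\P^1,\varphi^*\mathcal{T}_X(-1))=0$, so the generic splitting type on $Z$ is the maximal one appearing), but that only reduces the lemma to excluding components generically contained in $\{a_1\le-1\}$, which is the entire content.

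The paper's proof avoids dimension counts altogether and runs the classical ``a curve through a general point is free'' argument: since $d\le n-1$, the hypersurface $X$ is covered by lines (Proposition 2.13 of \cite{debarre}), so the evaluation map $p\colon \P^1\times\Mor_1(\P^1,X)\to X$ is surjective; generic smoothness in characteristic zero makes $dp$ surjective at a general point of each component of the source, and a short diagram chase using global generation of $\mathcal{T}_{\P^1}$ upgrades surjectivity of $dp$ to surjectivity of $H^0(\P^1,\varphi^*\mathcal{T}_X)\to(\varphi^*\mathcal{T}_X)_t$, i.e., global generation of $\varphi^*\mathcal{T}_X$, i.e., $a_1\ge 0$. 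Note where $d\le n-1$ actually enters: it supplies the geometric input that $X$ is covered by lines, rather than a numerical inequality between expected codimensions. A stratification-based proof along your lines would require a new theorem --- ``for every smooth $X$, no component of $F(X)$ lies in an unbalanced stratum'' --- which is precisely the statement to be proved.
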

\begin{proof}
    This is a small modification of the proof of Proposition 4.9 in \cite{debarre}. 

    Consider the universal space $I=\P^1 \times \Mor_1(\P^1, X)$ and the natural evaluation map $p \colon I\to X$.
    
    By Proposition 2.13 of \cite{debarre}, we know that $X$ is covered by lines, i.e. through any point of $X$, there is a line on $X$ that passes through the point. This means that $p$ is surjective.
    
    Since we are working over $\C$, the evaluation map $p$ is generically smooth on the source, which means that the induced map on tangent spaces $T_{(x,\varphi)}I \to T_{x}X$ is surjective for general $(x,\varphi)$. In particular, each irreducible component of $\Mor_1(\P^1,X)$ contains such a $\varphi$. Let $(x, \varphi)$ be such a pair, and let $t\in\P^{1}$ with $\varphi(t)=x$.
    

    Since the tangent space to $\Mor_1(\P^1,X)$ for a map $\varphi\colon \P^1 \hookrightarrow X$ is given by $H^0\left(\P^1, \varphi^*\mathcal{T}_{X}\right)$, we have $T_{(x,\varphi)}I\cong H^0\left(\P^1, \mathcal{T}_{\P^1}\right)\oplus H^0\left(\P^1, \varphi^*\mathcal{T}_{X}\right)$. So we have shown that $H^0\left(\P^1, \varphi^*\mathcal{T}_X\right)$ surjects onto $T_{x}X/\im\left(d\varphi_t\right)$. Moreover, since $\mathcal{T}_{\P^1}$ is globally generated, the natural map $H^0(\P^1, \mathcal{T}_{\P^1})\to T_{t}\P^1$ is surjective. By the commutative diagram
    $$\begin{tikzcd}
        {H^{0}(\mathbb{P}^{1},\mathcal{T}_{\mathbb{P}^{1}})} \arrow[r, two heads] \arrow[d] & {T_{t}\mathbb{P}^{1}} \arrow[d, "d\varphi_{t}"] \\
        {H^{0}(\mathbb{P}^{1},\varphi^{*}\mathcal{T}_{X})} \arrow[r]                        & {T_{x}X}                                 
    \end{tikzcd}$$
    the image of the map $H^0\left(\P^1, \varphi^*\mathcal{T}_X\right) \to T_{x}X$ contains the image of $d\varphi_t$ by naturality of $d\varphi$. In other words, $H^0\left(\P^1, \varphi^*\mathcal{T}_X\right)\to T_{x}X\cong(\varphi^{*}\mathcal{T}_{X})_{t}$ is surjective, which means $\varphi^*\mathcal{T}_X$ is globally generated at $t$, and therefore globally generated everywhere. Hence, $a_1\ge 0.$

\end{proof}

Let us set some notation, following \cite{hannah}. Letting $F(X)$ be the space of lines $\varphi\colon L\hookrightarrow X$, for $\Vec{a}=(a_{1},...,a_{n-2})$, define a sublocus $F_{\Vec{a}}(X)\subset F(X)$ of lines $\varphi\colon L \hookrightarrow X$ such that $\mathcal{N}_{L/X}\cong \bigoplus_{i=1}^{n-2}\mathcal{O}\left(a_i\right)$. 

Theorem 8 of \cite{fanolines} shows, for $2n\ge d+3$, that the Fano scheme of lines $F(X)$ is non-empty for any hypersurface $X$, and Theorem 1.1 of \cite{hannah} gives $$\codim_{F(X)}F_{\Vec{a}}(X)\le \sum_{i<j}\max\{a_{j}-a_{i}-1,0\}.$$ 


\begin{lem}\label{unbalanced}
    Assuming $n \ge d \ge 3$, there exists a $\varphi\in\Mor_{1}(\P^{1},X)$ such that $a_{1}\leq-1$, and the dimension of the locus of such $\varphi$ is at least $n$.
\end{lem}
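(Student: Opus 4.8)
The plan is to exhibit a single prescribed normal-bundle splitting type $\vec a$ with $a_{1}=-1$, to show via Theorem 1.1 of \cite{hannah} that the corresponding stratum $F_{\vec a}(X)$ is nonempty of controlled codimension, and then to convert the resulting dimension estimate on $F(X)$ into one on $\Mor_{1}(\P^{1},X)$ by passing through the $\PGL_{2}$-action. The numerology should be arranged so that the chosen stratum has codimension exactly $n-d$, which is precisely what is needed to land on the bound $n$.

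First I would pin down the degree constraint. By Lemma \ref{decomp}, a line $\varphi\colon\P^{1}\hookrightarrow X$ has $\varphi^{*}\mathcal{T}_{X}\cong\mathcal{O}(2)\oplus\bigoplus_{i=1}^{n-2}\mathcal{O}(a_{i})$ with each $a_{i}\le 1$, and adjunction ($\det\mathcal{T}_{X}=\mathcal{O}_{X}(n+1-d)$) forces $\sum_{i=1}^{n-2}a_{i}=\deg\varphi^{*}\mathcal{T}_{X}-2=(n+1-d)-2=n-1-d$. I would then choose the type
\[
\vec a=(-1,\underbrace{0,\ldots,0}_{d-3},\underbrace{1,\ldots,1}_{n-d}),
\]
which is admissible exactly because $n\ge d\ge 3$ makes both counts $d-3$ and $n-d$ nonnegative, with $1+(d-3)+(n-d)=n-2$ entries summing to $-1+(n-d)=n-1-d$ and all entries $\le 1$. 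Its codimension bound is immediate: in $\sum_{i<j}\max\{a_{j}-a_{i}-1,0\}$ the only contributing pairs are the $n-d$ pairs $(-1,1)$, each contributing $1-(-1)-1=1$, so Theorem 1.1 of \cite{hannah} yields $\codim_{F(X)}F_{\vec a}(X)\le n-d$.

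Next I would assemble the dimension count. Since $n\ge d\ge 3$ gives $2n\ge 2d\ge d+3$, the Fano scheme $F(X)$ is nonempty by Theorem 8 of \cite{fanolines}, and—being the zero locus of a section of a rank-$(d+1)$ bundle on $G(1,n)$—every component has dimension at least $(2n-2)-(d+1)=2n-3-d$. Combining this with the codimension bound, $F_{\vec a}(X)$ is nonempty of dimension at least $(2n-3-d)-(n-d)=n-3$. Finally, the forgetful map $\Mor_{1}(\P^{1},X)\to F(X)$ is a $\PGL_{2}$-bundle and the splitting type of $\varphi^{*}\mathcal{T}_{X}$ is invariant under reparameterization of $\P^{1}$, so the locus of $\varphi$ with $a_{1}\le -1$ is the preimage of (at least) $F_{\vec a}(X)$ and has dimension at least $(n-3)+3=n$, as claimed.

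The step I expect to be the main obstacle is the nonemptiness of the prescribed stratum $F_{\vec a}(X)$: the codimension inequality is only useful once $F_{\vec a}(X)\ne\emptyset$ is known, so the argument hinges on reading Theorem 1.1 of \cite{hannah} as guaranteeing nonemptiness of each admissible stratum \emph{together with} the codimension bound, rather than merely bounding the codimension of strata already known to be occupied. If only the inequality were available, I would need a separate existence input—for instance, showing that the globally generated locus $\{a_{1}\ge 0\}$ cannot exhaust a component of $F(X)$, thereby forcing an unbalanced line into its complement—before the dimension count could be run.
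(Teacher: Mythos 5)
You identify the crux correctly, but you do not resolve it, and the resolution you lean on is not available. Theorem 1.1 of \cite{hannah} is a codimension bound (together with an irreducibility statement for the universal stratum); it does not assert that $F_{\vec{a}}(X)$ is nonempty for \emph{every} smooth $X$. The nonemptiness statements in \cite{hannah} (Theorem 1.3) hold only for a \emph{general} hypersurface, while Lemma \ref{unbalanced} is claimed for an arbitrary smooth hypersurface of degree $d$ with $n\ge d\ge 3$ --- arbitrariness is exactly what Theorem \ref{mainthm} needs. So, as written, your argument proves the lemma only for general $X$; the main step is missing. (Indeed, if Theorem 1.1 already contained nonemptiness for every smooth $X$, the bulk of the paper's proof would be superfluous.) Your fallback suggestion --- showing that the locus $\{a_1\ge 0\}$ cannot exhaust $F(X)$ --- is also left unproved and is not obviously easier: a priori some special smooth $X$ could have every line with globally generated normal bundle, and ruling this out is precisely the content of the missing step.

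The paper fills this gap with a degeneration/properness argument over the universal family, and this occupies essentially the entire proof. With the same vector $\vec{c}=(-1,0,\ldots,0,1,\ldots,1)$ that you chose, it forms the universal Fano scheme $\Sigma\subset F(\P^n)\times U$ over the space $U$ of smooth degree-$d$ hypersurfaces (inside $\P^N$, $N=\binom{n+d}{d}-1$) and the closed locus $\overline{\Sigma}_{\vec{c}}=\bigcup_{\vec{a}'\le\vec{c}}\Sigma_{\vec{a}'}$; note that every $\vec{a}'\le\vec{c}$ has $a_1'\le c_1=-1$, so points of the closure still witness the desired unbalancedness. Theorem 1.1 of \cite{hannah} gives that $\overline{\Sigma}_{\vec{c}}$ is irreducible of dimension $N+n-3$; Theorem 1.3 gives nonemptiness of the fiber of $\pi\colon\overline{\Sigma}_{\vec{c}}\to U$ over a general $X$; and properness of $\pi$ together with irreducibility then forces $\pi$ to be surjective, i.e., every smooth $X$ contains a line with $a_1\le -1$. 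The fiber-dimension theorem for this surjection gives $\dim\pi^{-1}([X])\ge (N+n-3)-N=n-3$, which replaces your count via $\dim F(X)\ge 2n-d-3$ and $\codim_{F(X)}F_{\vec{c}}(X)\le n-d$ (that count would be fine, but only once nonemptiness is known). The remaining ingredients of your proposal --- the choice of $\vec{c}$, the computation $\sum_{i<j}\max\{c_j-c_i-1,0\}=n-d$, and adding $3$ for the $\PGL_2$-action --- agree with the paper and are correct.
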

\begin{proof}

    Let $\P^{N}$ for $N=\binom{n+d}{d}-1$ be the moduli space of all degree $d$ hypersurfaces and $U\subset \P^N$ be the open subset of smooth degree $d$ hypersurfaces. Also, let $$\Sigma=\left\{(L,X)\in F(\P^n) \times U\colon L\subset X\right\}$$ be the universal Fano scheme of lines over smooth degree $d$ hypersurfaces. For a vector $\Vec{a}=\left(a_1,\ldots, a_{n-2}\right)$, let $\Sigma_{\Vec{a}}$ denote the sublocus of $\Sigma$ comprising $(L,X)$ such that $\mathcal{N}_{L/X}$ has splitting type corresponding to $\Vec{a}$. Then, Section 2 of \cite{hannah} explains that $\Sigma_{\Vec{a}}$ can be endowed with a scheme structure and that the closure in $\Sigma$ $$\overline{\Sigma}_{\Vec{a}} = \bigcup_{\Vec{a}'\le \Vec{a}}\Sigma_{a'},$$
    where $\Vec{a}'\le \Vec{a}$ means that $a'_1+\cdots+a'_k\le a_1+\cdots +a_k$ for all $k.$

    Consider $\Vec{c}$ with
    $$c_{1}=-1, c_{2}=\cdots=c_{d-2}=0, c_{d-1}=\cdots=c_{n-2}=1.$$
    We have a canonical projection map $\pi\colon \overline{\Sigma}_{\Vec{c}} \to U$, and Theorem 1.1 of \cite{hannah} implies that $\overline{\Sigma}_{\Vec{c}}$ is irreducible and has codimension $n-d\ge 0$, which means that it has dimension $N+2n-d-3 - (n-d) = N+n-3$. Moreover, since $\overline{\Sigma}_{\Vec{c}}$ is proper over $\C$ and $U$ is separated over $\C$, it follows that $\pi$ is proper. By Theorem 1.3 of \cite{hannah}, any fiber of $\pi$ above a general hypersurface of degree $d$ is non-empty. Then, since $\pi$ is proper, it is moreover surjective, which proves the first claim about the existence of unbalanced splittings. 
    
    Moreover, by surjectivity, the dimension of any fiber is at least $\dim \pi^{-1}(U) - \dim U = N+n-3 - N = n-3$. Finally, each $L\subset X$ determines a map $\varphi\colon\P^{1}\to X$ up to an action of $\Aut(\P^{1})=\PGL_{2}$, which has dimension $3$. Thus, the dimension of the locus of all such $\varphi$ is $n$ as desired.
    
\end{proof}
   

\begin{proof}[Proof of Proposition \ref{reductthm} for $d\geq 3$]
    For non-smoothness, it suffices to show that there exist two points in the same irreducible component of $\Mor_{e}(C,X)$ with tangent spaces of different dimensions. 

    Let $\phi\colon C\to\P^{1}$ be a map of degree $e$, which exists by Lemma \ref{constructmap}. Let $\varphi_{1},\varphi_{2}\in\Mor_1(\P^{1}, X)$ be as in Lemma \ref{balanced} and Lemma \ref{unbalanced} respectively, and let $f_{1}=\phi\circ\varphi_{1}$ and $f_{2}=\phi\circ\varphi_{2}$. More specifically, by Lemma \ref{balanced}, we may assume that $\varphi_1$ and $\varphi_2$ are in the same irreducible component $Y$ of $\Mor_{1}(\P^{1},X)$. 
    
    Define $x(\Vec{a})=\#\{a_{i}=1\}$ and $y(\Vec{a})=\#\{a_{i}=0\}$. Since all $a_{i}\leq 1$ and $\sum_{i=1}^{n-2}a_{i}=n-d-1$, we have $x(\Vec{a})-(n-2-x(\Vec{a})-y(\Vec{a}))\geq n-d-1$ and so $2x(\Vec{a})+y(\Vec{a})\geq 2n-d-3$.
    
    Note that $\im(\varphi_{1})\in F_{\Vec{b}}(X)$ for $\Vec{b}$ with $x(\Vec{b})=n-d-1$ and $y(\Vec{b})=d-1$, and $\im(\varphi_{2})\in F_{\Vec{c}}(X)$ for $\Vec{c}$ with $x(\Vec{c})\geq n-d$.   
    
    For any $\varphi_{j}$ with $\im(\varphi_{j})\in F_{\Vec{a}}(X)$ and $f_{j}=\phi\circ\varphi_{j}$, we have 
    \begin{align*}
        h^{0}(C,f_{j}^{*}\mathcal{T}_{X}) &=
        h^{0}(C,\phi^{*}\mathcal{O}(2))+
        h^{0}(C,\phi^{*}\mathcal{O}(1))x(\Vec{a})+
        h^{0}(C,\phi^{*}\mathcal{O})y(\Vec{a})+
        \sum_{a_{i}<0}h^{0}(C,\phi^{*}\mathcal{O}(a_{i})) \\&=
        (2e+1-g)+(e+1-g)x(\Vec{a})+y(\Vec{a}).
    \end{align*}
    Thus, since $e+1-g\geq 3$ and $x(\Vec{c})-x(\Vec{b})\geq1$,
    \begin{align*}
        h^{0}(C,f_{2}^{*}\mathcal{T}_{X})-h^{0}(C,f_{1}^{*}\mathcal{T}_{X}) &=
        (e+1-g)(x(\Vec{c})-x(\Vec{b}))+(y(\Vec{c})-y(\Vec{b})) \\&>
        2(x(\Vec{c})-x(\Vec{b}))+(y(\Vec{c})-y(\Vec{b})) \\& =
        2x(\Vec{c})+y(\Vec{c})-(2x(\Vec{b})+y(\Vec{b}))\\&\geq
        2n-d-3-(2(n-d-1)+d-1) \\&= 0.
    \end{align*}
        
    By construction, $f_{1},f_{2}\colon C\to X$ live in a family over the irreducible variety $Y\hookrightarrow \Mor_{1}(\P^{1},X)$. Indeed, consider the composition
    $$\mathcal{F}\colon C\times Y\xrightarrow{\phi\times\textnormal{id}}\P^{1}\times Y\to X,$$
    where the latter morphism is the universal map. Then $\mathcal{F}(-,\varphi_{j})=f_{j}$ as desired. 

    Since the dimension of the tangent space increases at singularities, the singular locus of $\Mor_{e}(C,X)$ contains all points corresponding to maps $f_2=\phi\circ\varphi_{2}$ where $\im(\varphi_{2})\in F_{\Vec{c}}(X)$ and $\phi\colon C\to \P^{1}$ has degree $e$. By Lemma \ref{unbalanced}, the dimension of the locus of choices for $\varphi_{2}$ is at least $n$. And by Lemma \ref{constructmap} the dimension of the locus of choices of $\phi$ is the Brill-Noether number $-g+2e-2$. Thus, the locus of all choices of $\phi$ has dimension at least $n+2e-g-2$.
\end{proof}

\begin{proof}[Proof of Theorem \ref{mainthm} for $d\geq 3$]
    Follows immediately from Proposition \ref{reductthm} for $d\geq 3$ and Lemma \ref{reductlem}. 
\end{proof}

\bibliography{bibliography}{}

\begin{thebibliography}{10}

\bibitem{ACGH}
E.~Arbarello, M.~Cornalba, P.~A. Griffiths, and J.~Harris.
\newblock {\em Geometry of algebraic curves. {V}ol. {I}}, volume 267 of {\em
  Grundlehren der mathematischen Wissenschaften [Fundamental Principles of
  Mathematical Sciences]}.
\newblock Springer-Verlag, New York, 1985.
\newblock \href {https://doi.org/10.1007/978-1-4757-5323-3}
  {\path{doi:10.1007/978-1-4757-5323-3}}.

\bibitem{fanolines}
W.~Barth and A.~Van~de Ven.
\newblock {F}ano varieties of lines on hypersurfaces.
\newblock {\em Arch. Math. (Basel)}, 31(1):96--104, 1978/79.
\newblock \href {https://doi.org/10.1007/BF01226420}
  {\path{doi:10.1007/BF01226420}}.

\bibitem{beheshti_riedl}
Roya Beheshti and Eric Riedl.
\newblock Linear subspaces of hypersurfaces.
\newblock {\em Duke Math. J.}, 170(10):2263--2288, 2021.
\newblock \href {https://doi.org/10.1215/00127094-2021-0035}
  {\path{doi:10.1215/00127094-2021-0035}}.

\bibitem{BrowningSawinFree}
Tim Browning and Will Sawin.
\newblock Free rational curves on low degree hypersurfaces and the circle
  method.
\newblock {\em Algebra Number Theory}, 17(3):719--748, 2023.
\newblock \href {https://doi.org/10.2140/ant.2023.17.719}
  {\path{doi:10.2140/ant.2023.17.719}}.

\bibitem{debarre}
Olivier Debarre.
\newblock {\em Higher-dimensional algebraic geometry}.
\newblock Universitext. Springer-Verlag, New York, 2001.
\newblock \href {https://doi.org/10.1007/978-1-4757-5406-3}
  {\path{doi:10.1007/978-1-4757-5406-3}}.

\bibitem{AG}
Ulrich G\"ortz and Torsten Wedhorn.
\newblock {\em Algebraic geometry {I}. {S}chemes---with examples and
  exercises}.
\newblock Springer Studium Mathematik---Master. Springer Spektrum, Wiesbaden,
  second edition, 2020.
\newblock \href {https://doi.org/10.1007/978-3-658-30733-2}
  {\path{doi:10.1007/978-3-658-30733-2}}.

\bibitem{HarrisRothStarrRatCurvesI}
Joe Harris, Mike Roth, and Jason Starr.
\newblock Rational curves on hypersurfaces of low degree.
\newblock {\em J. Reine Angew. Math.}, 571:73--106, 2004.
\newblock \href {https://doi.org/10.1515/crll.2004.045}
  {\path{doi:10.1515/crll.2004.045}}.

\bibitem{haseliu2024higher}
Matthew Hase-Liu.
\newblock A higher genus circle method and an application to geometric
  {M}anin's conjecture, 2024.
\newblock \href {http://arxiv.org/abs/2402.10498} {\path{arXiv:2402.10498}}.

\bibitem{kont}
Maxim Kontsevich.
\newblock Enumeration of rational curves via torus actions.
\newblock In {\em The moduli space of curves ({T}exel {I}sland, 1994)}, volume
  129 of {\em Progr. Math.}, pages 335--368. Birkh\"auser Boston, Boston, MA,
  1995.
\newblock URL: \url{https://doi.org/10.1007/978-1-4612-4264-2_12}, \href
  {https://doi.org/10.1007/978-1-4612-4264-2\_12}
  {\path{doi:10.1007/978-1-4612-4264-2\_12}}.

\bibitem{hannah}
Hannah~K. Larson.
\newblock Normal bundles of lines on hypersurfaces.
\newblock {\em Michigan Math. J.}, 70(1):115--131, 2021.
\newblock \href {https://doi.org/10.1307/mmj/1586419412}
  {\path{doi:10.1307/mmj/1586419412}}.

\bibitem{GM4}
Brian Lehmann, Eric Riedl, and Sho Tanimoto.
\newblock Non-free curves on {F}ano varieties, 2023.
\newblock URL: \url{https://arxiv.org/abs/2304.05438}, \href
  {http://arxiv.org/abs/2304.05438} {\path{arXiv:2304.05438}}.

\bibitem{GM3}
Brian Lehmann, Eric Riedl, and Sho Tanimoto.
\newblock Codimension of jumping loci, 2024.
\newblock URL: \url{https://arxiv.org/abs/2408.08759}, \href
  {http://arxiv.org/abs/2408.08759} {\path{arXiv:2408.08759}}.

\bibitem{GM2}
Brian Lehmann, Eric Riedl, and Sho Tanimoto.
\newblock Non-free sections of {F}ano fibrations, 2024.
\newblock URL: \url{https://arxiv.org/abs/2301.01695}, \href
  {http://arxiv.org/abs/2301.01695} {\path{arXiv:2301.01695}}.

\bibitem{key-2}
{The Stacks Project Authors}.
\newblock \textit{Stacks Project}, 2024.
\newblock URL: \url{https://stacks.math.columbia.edu}.

\end{thebibliography}
\bibliographystyle{plainurl}

\end{document}